\definecolor{myeditcolor}{named}{blue}
\titleformat{\section}[block]{\bfseries\large}{\thesection. }{2pt}{}
\theoremstyle{definition}
\newtheorem{theorem}{Theorem}[section]
\newtheorem{proposition}{Proposition}[section]
\newtheorem{corollary}[theorem]{Corollary}
\newtheorem{lemma}[theorem]{Lemma}
\newtheorem{remark}[theorem]{Remark}
\begin{document}
\thispagestyle{empty}
\begin{center}
\noindent\textbf{{\Large Classification of left invariant Riemannian metrics on  nonunimodular 4-dimensional Lie groups}}
\end{center}
\begin{center} \noindent Malika Ait Ben Haddou\footnote{m.aitbenhaddou@umi.ac.ma} and Youssef Ayad\footnote{corresponding author: youssef.ayad@edu.umi.ac.ma}\\
\small{\textit{$^{1, 2}$Faculty of sciences, Moulay Ismail University of Mekn\`{e}s}}\\
\small{\textit{B.P. 11201, Zitoune, Mekn\`{e}s}, Morocco}
\end{center}
\begin{center}
\textbf{Abstract}
\end{center}
\begin{center}
\hspace*{3mm}    We classify, up to automorphism, left invariant Riemannian metrics on 4-dimensional simply connected nonunimodular Lie groups. This is equivalent to classifying, up to automorphism, inner products on 4-dimensional nonunimodular Lie algebras.
\end{center}
\begin{center}
\textbf{Keywords} Lie Group, Riemannian Metric, Lie Algebra, Automorphism.
\end{center}
\begin{center}
\textbf{Mathematics Subject Classification} 53C30, 53C20.	
\end{center}
\section{Introduction}
\hspace*{3mm}     The primary objective of this paper is to classify, up to automorphism, left invariant Riemannian metrics on 4-dimensional, simply connected nonunimodular Lie groups. This task is equivalent to classifying, up to automorphism, inner products on 4-dimensional nonunimodular Lie algebras. The classification of left invariant Riemannian metrics on simply connected unimodular Lie groups is credited to Ha and Lee \cite{ha2009left} for dimension three, and to Van Thuong \cite{van2017metrics} for dimension four. Additionally, Boucetta and Chakkar \cite{boucetta2022moduli} provided a classification of Lorentzian metrics on three-dimensional Lie groups in the unimodular case, while Ha and Lee \cite{ha2023left} addressed the nonunimodular case. The classification of metrics on nonunimodular Lie groups of dimension 4 has not been investigated to date. This serves as a motivating factor for us to address this significant problem. An interesting question that arises after classifying these metrics is the characterization of their isometry groups, which hold significant importance in both mathematics and physics. The elements of the group of isometries preserve fundamental concepts such as geodesics, the Levi-Civita connection, curvatures, and more. The problem of the determination of the isometry groups of metrics on Lie groups has been fully resolved in dimension three by the following studies \cite{ana2022isometry,ha2012isometry,boucetta2022isometry,shin1997isometry}. In dimension four, this problem is resolved in some interesting cases by the following works \cite{aitbenhaddou2024oscillator,ayad2024isometry,vsukilovic2017isometry}.  Let $\widetilde{\mathfrak{M}}$ be the set of inner products on a Lie algebra $\mathfrak{g}$. There exists a natural action of the automorphism group $\operatorname{Aut}(\mathfrak{g})$ on $\widetilde{\mathfrak{M}}$ given by, for $A \in \operatorname{Aut}(\mathfrak{g})$,
$$A^{\ast}\langle u, v\rangle = \langle A^{-1}u, A^{-1}v\rangle \quad \forall u, v \in \mathfrak{g}.$$
Where $A^{\ast}\langle ., . \rangle$ is the pullback of the inner product $\langle ., . \rangle$ under $A$, and hence $\langle ., . \rangle$ and $A^{\ast}\langle ., . \rangle$ are isometric. The moduli space of left invariant metrics on a Lie group $G$ with Lie algebra $\mathfrak{g}$ is the orbit space of the action of $\operatorname{Aut}(\mathfrak{g})$ on $\widetilde{\mathfrak{M}}$ given by \cite{kodama2011space}
$$\mathfrak{M} = \operatorname{Aut}(\mathfrak{g}) \backslash \widetilde{\mathfrak{M}}.$$
This is the space of all left invariant metrics up to automorphism.\\
In \cite{van2017metrics}, Van Thuong provided a crucial method for classifying inner products on 4-dimensional Lie algebras, focusing on the unimodular case.  In this paper, we extend this classification to the nonunimodular case. Below, we provide a comprehensive list of all 4-dimensional nonunimodular Lie algebras \cite{Mubarakzyanov,kremlev2010signature}\vspace{1cm}

\begin{center}
	\textbf{Table 1} \vspace{2mm}
	
\begin{tabular}{|p{3.5cm}|p{10cm}|} \hline
Lie algebra  & Nonzero commutators  \\ \hline
$ \operatorname{A}_2 \oplus \operatorname{2A}_1 $ & $[e_1, e_2] = e_2$ \\ \hline
$ \operatorname{2A}_2 $ &  $[e_1, e_2] = e_2, [e_3, e_4] = e_4$ \\ \hline
$ \operatorname{A}_{3, 2} \oplus \operatorname{A}_1 $ & $[e_1, e_3] = e_1, [e_2, e_3] = e_1 + e_2$ \\ \hline
$ \operatorname{A}_{3, 3} \oplus \operatorname{A}_1 $ & $[e_1, e_3] = e_1, [e_2, e_3] = e_2$ \\ \hline
$\operatorname{A}_{3, 5}^{\alpha} \oplus \operatorname{A}_1, \newline
 0 < \left| \alpha \right|  < 1 $  & $[e_1, e_3] = e_1, [e_2, e_3] = \alpha e_2$ \\ \hline
$ \operatorname{A}_{3, 7}^{\alpha} \oplus \operatorname{A}_1, \alpha > 0 $ &$[e_1, e_3] = \alpha e_1 - e_2, [e_2, e_3] = e_1 + \alpha e_2$ \\ \hline
$ \operatorname{A}_{4, 2}^{\alpha}, \alpha \neq 0,\newline
\alpha \neq -2 $ & $[e_1, e_4] = \alpha e_1, [e_2, e_4] = e_2, [e_3, e_4] = e_2 + e_3$ \\ \hline
$ \operatorname{A}_{4, 3} $ & $[e_1, e_4] = e_1, [e_3, e_4] = e_2$ \\ \hline
$ \operatorname{A}_{4, 4} $ & $[e_1, e_4] = e_1, [e_2, e_4] = e_1 + e_2, [e_3, e_4] = e_2 + e_3$  \\ \hline
$ \operatorname{A}_{4, 5}^{\alpha, \beta}, \alpha\beta \neq 0, \newline
-1 \leq \alpha \leq \beta \leq 1,\newline
\alpha + \beta \neq -1 $ & $\newline
[e_1, e_4] = e_1, [e_2, e_4] = \alpha e_2, [e_3, e_4] = \beta e_3$  \\ \hline
$ \operatorname{A}_{4, 6}^{\alpha, \beta}, \alpha \neq 0, \newline
\beta \geq 0, \alpha \neq -2\beta $ & $[e_1, e_4] = \alpha e_1, [e_2, e_4] = \beta e_2 - e_3, [e_3, e_4] = e_2 + \beta e_3$  \\ \hline	
$ \operatorname{A}_{4, 7} $ & $[e_2, e_3] = e_1, [e_1, e_4] = 2e_1, [e_2, e_4] = e_2, [e_3, e_4] = e_2 + e_3$  \\ \hline
$ \operatorname{A}_{4, 9}^{\beta}, -1 < \beta \leq 1 $ & $[e_2, e_3] = e_1, [e_1, e_4] = (1 + \beta)e_1, [e_2, e_4] = e_2, \newline
[e_3, e_4] = \beta e_3$  \\ \hline
$ \operatorname{A}_{4, 11}^{\alpha}, \alpha > 0 $ & $[e_2, e_3] = e_1, [e_1, e_4] = 2\alpha e_1, [e_2, e_4] = \alpha e_2 - e_3, \newline
[e_3, e_4] = e_2 + \alpha e_3$  \\ \hline
$ \operatorname{A}_{4, 12} $ & $[e_1, e_3] = e_1, [e_2, e_3] = e_2, [e_1, e_4] = -e_2, [e_2, e_4] = e_1$  \\ \hline
\end{tabular}
\end{center}
\vspace{1cm}

In the following table, we present a summary of the results concerning the dimension of the moduli space of left-invariant Riemannian metrics on nonunimodular 4-dimensional Lie groups as discussed in this paper.\\
Note that for the case of the Lie algebra $\operatorname{A}_{4, 2}^{\alpha}$, we will study the following two cases\\
$\bullet$ The Lie algebra $\operatorname{A}_{4, 2}^{\alpha}$, where $\alpha \neq (0, 1)$.\\
$\bullet$ The Lie algebra $\operatorname{A}_{4, 2}^{1}$.\\
The automorphisms of these two Lie algebras were corrected by the authors of the following paper \cite{christodoulakis2003automorphisms}. The correct results regarding the automorphisms of these two Lie algebras are described in \cite{christodoulakis2003corrigendum}.
\newpage 
\begin{center}

\textbf{Table 2} \vspace{2mm}
	
\begin{tabular}{|p{3.5cm}|p{3.5cm}|} \hline
Lie algebra  &  $\operatorname{dim}(\mathfrak{M})$ \\ \hline
$ \operatorname{A}_2 \oplus \operatorname{2A}_1 $ & 5 \\ \hline
$ \operatorname{2A}_2 $ &  7 \\ \hline
$ \operatorname{A}_{3, 2} \oplus \operatorname{A}_1 $ & 5 \\ \hline
$ \operatorname{A}_{3, 3} \oplus \operatorname{A}_1 $ & 3 \\ \hline
$\operatorname{A}_{3, 5}^{\alpha} \oplus \operatorname{A}_1$  & 5 \\ \hline
$ \operatorname{A}_{3, 7}^{\alpha} \oplus \operatorname{A}_1$ & 5 \\ \hline
$ \operatorname{A}_{4, 2}^{\alpha}, \alpha \neq (0, 1)$ \newline
$\operatorname{A}_{4, 2}^{1}$ & 4 \newline 
3 \\ \hline
$ \operatorname{A}_{4, 3} $ & 4 \\ \hline
$ \operatorname{A}_{4, 4} $ & 4  \\ \hline
$ \operatorname{A}_{4, 5}^{\alpha, \beta}, \alpha\beta \neq 0$ & 4  \\ \hline
$ \operatorname{A}_{4, 6}^{\alpha, \beta}, \alpha \neq 0 $ & 4  \\ \hline	
$ \operatorname{A}_{4, 7} $ & 5  \\ \hline
$ \operatorname{A}_{4, 9}^{\beta} $ & 5 \\ \hline
$ \operatorname{A}_{4, 11}^{\alpha} $ & 5  \\ \hline
$ \operatorname{A}_{4, 12} $ & 6  \\ \hline
\end{tabular}
\end{center}
\section{Preliminaries}
The space of inner products on $\mathfrak{g}$ is the space of symmetric, positive definite matrices denoted $\operatorname{S}_n$. Consider the following notations\\
$\operatorname{Tsup}_n$ := the group consisting of upper triangular matrices with positive diagonal entries.\\
$\operatorname{Tinf}_n$ := the group consisting of lower triangular matrices with positive diagonal entries.\\
$\operatorname{D}_n^{+}$ := the group consisting of diagonal matrices with positive elements.
\begin{proposition}
The following map is bijective
$$ \begin{array}{rcl}
\psi : \operatorname{Tsup}_{n}&\longrightarrow& \operatorname{S}_{n}\\
B &\longmapsto& (B^{-1})^T(B^{-1})
\end{array} $$
\end{proposition}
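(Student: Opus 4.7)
\medskip

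\noindent\textbf{Proof plan.} The plan is to recognize $\psi$ as (essentially) the Cholesky factorization map and prove bijectivity by checking well-definedness, injectivity, and surjectivity separately. Observe first that if $B\in\operatorname{Tsup}_n$, then $C:=B^{-1}$ is again in $\operatorname{Tsup}_n$ (upper triangularity and positivity of the diagonal are both preserved by inversion); since $B\mapsto B^{-1}$ is a self-bijection of $\operatorname{Tsup}_n$, it suffices to show that the map $C\mapsto C^{T}C$ from $\operatorname{Tsup}_n$ to $\operatorname{S}_n$ is bijective. Well-definedness is straightforward: $C^{T}C$ is visibly symmetric, and for any nonzero $x\in\mathbb{R}^{n}$ one has $x^{T}C^{T}Cx=\|Cx\|^{2}>0$ because $C$ is invertible, so $C^{T}C\in\operatorname{S}_n$.

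For injectivity, I would suppose $C_1^{T}C_1=C_2^{T}C_2$ with $C_1,C_2\in\operatorname{Tsup}_n$ and set $Q:=C_2C_1^{-1}$. A direct calculation gives
\[
Q^{T}Q=(C_1^{-1})^{T}C_2^{T}C_2C_1^{-1}=(C_1^{-1})^{T}C_1^{T}C_1C_1^{-1}=I,
\]
so $Q$ is orthogonal. On the other hand $Q$ is a product of two elements of $\operatorname{Tsup}_n$, hence itself upper triangular with positive diagonal. The key small lemma here is that an orthogonal upper triangular matrix with positive diagonal entries is necessarily the identity; I would prove this by induction on the columns (the first column of $Q$ has unit norm and a single nonzero entry $Q_{11}>0$, forcing $Q_{11}=1$ and the first column to be $e_1$; proceeding column by column gives $Q=I$). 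Therefore $C_1=C_2$.

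For surjectivity, the task is to produce, for each $S\in\operatorname{S}_n$, a matrix $C\in\operatorname{Tsup}_n$ with $C^{T}C=S$; one then takes $B=C^{-1}$. This is exactly the (transposed) Cholesky decomposition of $S$, which I would establish either by invoking the classical Cholesky theorem or, more intrinsically, by applying the Gram--Schmidt procedure to the standard basis $(e_1,\dots,e_n)$ with respect to the inner product $\langle x,y\rangle_{S}:=x^{T}Sy$. The Gram--Schmidt output produces vectors $f_1,\dots,f_n$ forming an $S$-orthonormal basis and expressed upper-triangularly in terms of the $e_i$ with positive diagonal coefficients; assembling these coefficients into $C\in\operatorname{Tsup}_n$ yields $C^{T}SC=I$, equivalently $S=(C^{-1})^{T}C^{-1}$, so setting $B=C$ gives $\psi(B)=S$.

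The step I expect to require the most care is the surjectivity argument, since it is the one piece that is not purely formal: one must verify that the Gram--Schmidt (equivalently, Cholesky) recursion does not break down, which uses the positive-definiteness of $S$ at each stage to ensure the diagonal entries are strictly positive. The injectivity, by contrast, reduces cleanly to the elementary lemma on upper triangular orthogonal matrices stated above.
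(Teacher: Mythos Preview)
Your proposal is correct and follows essentially the same approach as the paper: injectivity via the lemma that an orthogonal matrix in $\operatorname{Tsup}_n$ must be the identity, and surjectivity via Gram--Schmidt applied to the canonical basis with respect to the inner product defined by $S$. The only cosmetic difference is that the paper proves the injectivity lemma by observing that $M^{-1}=M^{T}$ forces $M\in\operatorname{Tsup}_n\cap\operatorname{Tinf}_n=\operatorname{D}_n^{+}$ (hence $M=I$), whereas you argue column by column; both arguments are equally valid and equally short.
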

\begin{proof}
It is clear that the map $\psi$ is well defined. For the injectivity, let $B, C \in \operatorname{Tsup}_{n}$ such that $\psi(B) = \psi(C)$. This is equivalent to the following equality $B^{-1}C = (C^{-1}B)^{T}$. Put $M = C^{-1}B$, then 
$B^{-1}C = (C^{-1}B)^{T} \Leftrightarrow M^{-1} = M^{T}$. Since $\operatorname{Tsup}_{n}$ is a group, then $M, M^{-1} \in \operatorname{Tsup}_{n}$. Since $M^{-1} = M^{T} \in \operatorname{Tinf}_n$, then $M \in \operatorname{Tsup}_n \cap \operatorname{Tinf}_n = \operatorname{D}_n^{+}$. Hence we have $M^{-1} = M^{T} = M$, then $M = I_n$. This implies that $B = C$ and $\psi$ is injective.\\
For the surjectivity, let $A \in \operatorname{S}_{n}$ and consider the inner product on $\mathbb{R}^{n}$ defined by 
$$\langle u, v\rangle = u^{T}Av, \forall u, v \in \mathbb{R}^{n}.$$
Let $\mathcal{B} = \left\lbrace e_1, ..., e_n\right\rbrace$ be the canonical basis of $\mathbb{R}^{n}$. The Gram-Schmidt procedure produces an orthonormal basis $\mathscr{B} = \left\lbrace v_1, ..., v_n\right\rbrace$ of the Euclidean space $\left( \mathbb{R}^{n}, \langle ., .\rangle\right)$ such that
$$[v_1, ..., v_n] = X \in Tsup_n.$$
This means that $Xe_i = v_i$. Since $\mathscr{B} = \{v_1, ..., v_n\}$ is an orthonormal basis of $(\mathbb{R}^n, \langle ., . \rangle)$, then
$$\operatorname{Mat}\left(\langle ., .\rangle , \mathscr{B}\right) =X^TAX = I_n.$$
Thus $A = (X^{-1})^{T}(X^{-1}) = \psi(X)$. Hence $\psi$ is surjective.\\
Therefore $\psi$ is a bijection.
\end{proof}
\begin{corollary} (proposition 1.4 in\cite{van2017metrics})
The set of inner products on $\mathfrak{g}$ is $(n^2 + n)/2$-dimensional, and can be identified with the set of upper triangular matrices with positive diagonal entries.
\end{corollary}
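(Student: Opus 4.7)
The plan is to deduce the corollary directly from Proposition~2.1, together with the standard identification between inner products and symmetric positive definite matrices. First, I would fix a basis of $\mathfrak{g}$, which induces a linear isomorphism $\mathfrak{g} \cong \mathbb{R}^n$. Under this isomorphism, every inner product $\langle\,,\,\rangle$ on $\mathfrak{g}$ is represented by its Gram matrix $A$ in the chosen basis, and the correspondence $\langle\,,\,\rangle \leftrightarrow A$ is a bijection between the set of inner products on $\mathfrak{g}$ and $\operatorname{S}_n$. This step just unpacks the definitions and does not use the proposition.

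Next, I would invoke Proposition~2.1, which provides a bijection $\psi : \operatorname{Tsup}_n \to \operatorname{S}_n$ sending $B$ to $(B^{-1})^T(B^{-1})$. Composing the two bijections yields an identification between inner products on $\mathfrak{g}$ and elements of $\operatorname{Tsup}_n$, which is precisely the second assertion of the corollary.

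Finally, the dimension statement is a routine parameter count: an upper triangular $n \times n$ matrix has $n$ diagonal entries (constrained only to be positive, an open condition) and $\binom{n}{2} = n(n-1)/2$ strictly upper off-diagonal entries (unconstrained), giving a total of $n + n(n-1)/2 = (n^2+n)/2$ real parameters. Since $\operatorname{Tsup}_n$ is an open subset of the vector space of upper triangular matrices, its dimension as a manifold equals this parameter count, and transporting the smooth structure across the bijection $\psi$ gives the same dimension for $\operatorname{S}_n$ and hence for the space of inner products on $\mathfrak{g}$. There is no real obstacle here; the only point requiring any care is to make sure the identification is viewed as one of smooth manifolds (not merely of sets) so that the dimension statement is meaningful, which follows because $\psi$ is a smooth diffeomorphism onto its image.
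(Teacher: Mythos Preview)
Your proposal is correct and is exactly the intended deduction: the paper does not give a separate proof of this corollary, treating it as an immediate consequence of Proposition~2.1 together with the standard identification of inner products with their Gram matrices in $\operatorname{S}_n$ and the obvious parameter count for $\operatorname{Tsup}_n$. There is nothing to add.
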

\begin{remark}
The moduli space of left invariant metrics given by $\mathfrak{M} = \operatorname{Aut}(\mathfrak{g}) \backslash \widetilde{\mathfrak{M}}$ is identified with the double coset space $\operatorname{Aut}(\mathfrak{g})\backslash \operatorname{GL}(n, \mathbb{R})/\operatorname{O}(n, \mathbb{R})$. See \cite{van2017metrics}.
\end{remark}
Let us recall the method of classification of inner products on Lie algebras $\mathfrak{g}$ with $\operatorname{dim}(\mathfrak{g}) = 4$ given in section 2 of the following paper \cite{van2017metrics}. The classification was done by the following steps
\begin{enumerate}
\item Fixing a basis for $\mathfrak{g}$, calculate $\operatorname{Aut}(\mathfrak{g}) \subset \operatorname{GL}(4, \mathbb{R})$.
\item Consider an orthonormal basis for $\mathfrak{g}$, defined by an upper triangular matrix with positive diagonal entries :
\begin{equation} \label{basis}
B' = \begin{bmatrix}
b_{11} & b_{12} & b_{13} & b_{14}\\
0 & b_{22} & b_{23} & b_{24}\\
0 & 0 & b_{33} & b_{34}\\
0 & 0 & 0 & b_{44}\\
\end{bmatrix}, \quad [B'] \in \operatorname{GL}(4, \mathbb{R})/\operatorname{O}(4, \mathbb{R}).
\end{equation}
\item By left multiplication by some $A \in \operatorname{Aut}(\mathfrak{g})$, reduce $B'$ to a simpler upper triangular matrix $AB'$.
\item Note that $[B'], [B''] \in \operatorname{GL}(4, \mathbb{R})/\operatorname{O}(4, \mathbb{R})$ lie in the same $\operatorname{Aut}(\mathfrak{g})$ orbit if and only if $AB' = B''U$, for some $U \in \operatorname{O}(4, \mathbb{R})$, or
$$(B'')^{-1}AB' \in \operatorname{O}(4, \mathbb{R}).$$
\end{enumerate}
The following lemmas are crucial as they greatly simplify our main task of classifying upper triangular matrices with positive diagonal entries. The proofs of these lemmas are given in the Van Thuong paper \cite{van2017metrics}.
\begin{lemma} \label{lemma2.2}
Suppose for any $x \in \mathbb{R}^{3}$, there is $A \in \operatorname{Aut}(\mathfrak{g})$ which is block upper triangular an upper $3 \times 3$ block $\bar{A}$: $\begin{bmatrix}
\bar{A} & x\\
0 & a
\end{bmatrix}$. Then we may assume our orthonormal basis in (\ref{basis}) is block diagonal with upper $3 \times 3$ block: $\begin{bmatrix}
\bar{B} & 0\\
0 & b_{44}
\end{bmatrix}$.
\end{lemma}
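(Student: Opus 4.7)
The strategy is to exhibit, within the $\operatorname{Aut}(\mathfrak{g})$-orbit of $B'$ inside $\operatorname{GL}(4,\mathbb{R})/\operatorname{O}(4,\mathbb{R})$, a representative whose canonical upper triangular form (guaranteed by Proposition 2.1) is block diagonal. To do this, I would first use the hypothesis to kill the top-right column of $B'$ by a left action of a suitable automorphism, and then restore upper triangularity with positive diagonal by a right action of a block diagonal element of $\operatorname{O}(4,\mathbb{R})$.

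\textbf{Step 1: killing the top-right column.} Split $B'$ as
$$B' = \begin{bmatrix}\bar{B}' & y \\ 0 & b_{44}\end{bmatrix}, \qquad \bar{B}' \in \operatorname{Tsup}_3,\ y=(b_{14},b_{24},b_{34})^T \in \mathbb{R}^3,\ b_{44}>0.$$
By hypothesis, I may pick an automorphism $A = \begin{bmatrix}\bar{A} & x \\ 0 & a\end{bmatrix}$ whose top-right column $x$ satisfies $\bar{A}y + b_{44}x = 0$ (in the concrete Lie algebras of Table 1 this will come from a translation subfamily $\bigl\{\begin{bmatrix}I_3 & x \\ 0 & 1\end{bmatrix} : x \in \mathbb{R}^3\bigr\}\subset \operatorname{Aut}(\mathfrak{g})$, in which case one simply takes $x = -y/b_{44}$). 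A direct block product then gives
$$A B' = \begin{bmatrix}\bar{A}\bar{B}' & \bar{A}y + b_{44}x \\ 0 & ab_{44}\end{bmatrix} = \begin{bmatrix}\bar{A}\bar{B}' & 0 \\ 0 & ab_{44}\end{bmatrix},$$
so $AB'$ is block diagonal.

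\textbf{Step 2: restoring the normal form.} The matrix $\bar{A}\bar{B}' \in \operatorname{GL}(3,\mathbb{R})$ admits an RQ-type factorization $\bar{A}\bar{B}' = \bar{B}\,\bar{U}^{-1}$ with $\bar{B}\in \operatorname{Tsup}_3$ and $\bar{U}\in \operatorname{O}(3,\mathbb{R})$; this is Proposition 2.1 applied in dimension three to the symmetric positive definite matrix $(\bar{A}\bar{B}')^T(\bar{A}\bar{B}')$ (equivalently, Gram--Schmidt). Setting $U = \begin{bmatrix}\bar{U} & 0 \\ 0 & \varepsilon\end{bmatrix}\in \operatorname{O}(4,\mathbb{R})$ with $\varepsilon = \operatorname{sign}(ab_{44})$ yields
$$A B' U = \begin{bmatrix}\bar{B} & 0 \\ 0 & |a|\,b_{44}\end{bmatrix},$$
which is block diagonal, upper triangular, and has positive diagonal. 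Since $A\in \operatorname{Aut}(\mathfrak{g})$ and $U\in \operatorname{O}(4,\mathbb{R})$, the classes $[AB'U]$ and $[B']$ coincide in $\operatorname{Aut}(\mathfrak{g})\backslash \operatorname{GL}(4,\mathbb{R})/\operatorname{O}(4,\mathbb{R})$, so $B'$ may indeed be chosen of the stated block diagonal form.

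\textbf{Main obstacle.} The only genuinely nontrivial ingredient is the first step: one has to convert the hypothesis into a concrete automorphism whose top-right column is exactly $-\bar{A}y/b_{44}$. Without additional structure the coupling between $\bar{A}$ and $x$ could in principle obstruct this, which is why the lemma will in practice be invoked only after exhibiting an explicit translation subfamily in $\operatorname{Aut}(\mathfrak{g})$. Once such a family is available the remainder of the argument, being routine block matrix algebra together with a three-dimensional orthogonalization, presents no further difficulty.
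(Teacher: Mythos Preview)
The paper does not supply its own proof of this lemma; it simply cites Van Thuong's paper, stating that ``the proofs of these lemmas are given in the Van Thuong paper \cite{van2017metrics}.'' Your argument is correct and is essentially the standard one that appears there: left-multiply by a block upper triangular automorphism to annihilate the last column above the $(4,4)$ entry, then right-multiply by a block diagonal orthogonal matrix to restore the upper triangular normal form via a $3\times 3$ QR/Gram--Schmidt step. Your identification of the one subtle point---that the hypothesis only guarantees, for each $x$, \emph{some} $\bar A$ and $a$, so one must in practice have a translation subfamily with $\bar A=I_3$---is accurate and matches how the lemma is actually applied throughout the classification.
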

\begin{lemma} \label{lemma2.3}
Let $B$ and $C$ be two orthonormal bases, both block diagonal with upper $3 \times 3$ block. Then for any $A \in \operatorname{Aut}(\mathfrak{g})$ which is block triangular with upper $3 \times 3$ block, $C^{-1}AB \in \operatorname{O}(4, \mathbb{R})$ forces $A$ to be block diagonal with upper $3 \times 3$ block.
\end{lemma}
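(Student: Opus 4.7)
The plan is to exploit the block structures throughout and reduce the orthogonality condition to a single linear equation forcing the off-diagonal block of $A$ to vanish. Writing $B = \begin{bmatrix} \bar{B} & 0 \\ 0 & b_{44} \end{bmatrix}$, $C = \begin{bmatrix} \bar{C} & 0 \\ 0 & c_{44} \end{bmatrix}$, and $A = \begin{bmatrix} \bar{A} & x \\ 0 & a \end{bmatrix}$ with $x \in \mathbb{R}^{3}$, a direct block computation yields
$$C^{-1}AB = \begin{bmatrix} \bar{C}^{-1}\bar{A}\bar{B} & b_{44}\,\bar{C}^{-1}x \\ 0 & c_{44}^{-1}\,a\,b_{44} \end{bmatrix},$$
so the product inherits the same block upper triangular shape as $A$.

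The main step is the elementary observation that a block upper triangular orthogonal matrix of this shape must in fact be block diagonal. Writing such a matrix as $U = \begin{bmatrix} P & y \\ 0 & q \end{bmatrix}$ and expanding $U^{T}U = I_{4}$ produces $P^{T}P = I_{3}$, $P^{T}y = 0$, and $y^{T}y + q^{2} = 1$. The first condition makes $P$ orthogonal, hence invertible, and the second then forces $y = 0$.

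Applied to $U = C^{-1}AB$, this gives $b_{44}\,\bar{C}^{-1}x = 0$. Since $b_{44} > 0$ and $\bar{C}$ lies in $\operatorname{Tsup}_{3}$ (so its diagonal entries are strictly positive), $\bar{C}^{-1}$ is invertible, and we conclude $x = 0$. Therefore $A$ is block diagonal with upper $3 \times 3$ block $\bar{A}$, as claimed. I do not foresee any real obstacle; the only point requiring care is the invertibility of $\bar{C}$, which is immediate from $C$ being orthonormal in the sense of \eqref{basis}, and the structural lemma about orthogonal block upper triangular matrices, which is standard and settled in one line via $U^{T}U = I_{4}$.
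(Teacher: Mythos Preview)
Your argument is correct: the block computation of $C^{-1}AB$ is right, and the one-line lemma that a block upper triangular orthogonal matrix must be block diagonal (via $U^{T}U = I_{4}$ forcing $P^{T}y=0$ with $P$ invertible) cleanly yields $x=0$. Note that the paper does not actually supply its own proof of this lemma but defers to Van Thuong \cite{van2017metrics}; your direct verification is exactly the standard argument one would expect there, so there is nothing substantively different to compare.
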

\begin{lemma} \label{lemma2.4}
Let $M = \operatorname{GL}(2, \mathbb{R})/\operatorname{O}(2, \mathbb{R})$. Consider the subgroup of $\operatorname{GL}(2, \mathbb{R})$ isomorphic to $\left( \mathbb{R}^{\times}\right)^{2} \rtimes \mathbb{Z}_2$:
$$\left\lbrace \begin{bmatrix}
a & 0\\
0 & d
\end{bmatrix} |\quad a, d \neq 0\right\rbrace \cup \left\lbrace \begin{bmatrix}
0 & b\\
c & 0
\end{bmatrix} |\quad b, c \neq 0\right\rbrace.$$
Then $\left( \mathbb{R}^{\times}\right)^{2} \rtimes \mathbb{Z}_2\backslash M \cong \left\lbrace \begin{bmatrix}
1 & x\\
0 & 1
\end{bmatrix} |\quad x \geq 0\right\rbrace.$
\end{lemma}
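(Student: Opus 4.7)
The plan is to identify $M = \operatorname{GL}(2,\mathbb{R})/\operatorname{O}(2,\mathbb{R})$ with $\operatorname{Tsup}_2$ via the QR decomposition (so that, by Proposition 2.1, every class has a unique representative $B' = \begin{bmatrix} b_{11} & b_{12} \\ 0 & b_{22} \end{bmatrix}$ with $b_{11}, b_{22} > 0$), work out how the subgroup $H := (\mathbb{R}^\times)^2 \rtimes \mathbb{Z}_2$ acts on these representatives, and then establish existence and uniqueness of the canonical form separately.

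For existence, I would use only the diagonal subgroup. Given $A = \operatorname{diag}(a, d)$, the product $AB'$ is upper triangular with diagonal $(ab_{11}, db_{22})$; right-multiplying by $\operatorname{diag}(\operatorname{sgn}(a), \operatorname{sgn}(d)) \in \operatorname{O}(2,\mathbb{R})$ restores positivity and yields the $\operatorname{Tsup}_2$-representative
$$\begin{bmatrix} |a|\, b_{11} & \operatorname{sgn}(ad)\, |a|\, b_{12} \\ 0 & |d|\, b_{22} \end{bmatrix}.$$
Choosing $|a| = 1/b_{11}$, $|d| = 1/b_{22}$, and the sign of $ad$ so that the $(1,2)$-entry is non-negative, this sends any $B'$ to $\begin{bmatrix} 1 & x \\ 0 & 1 \end{bmatrix}$ with $x = |b_{12}|/b_{11} \geq 0$. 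So the claimed set meets every $H$-orbit.

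For uniqueness, I would pass to the Gram-matrix picture. Two classes $[B']$ and $[B'']$ lie in the same $H$-orbit iff $A B' (B')^T A^T = B''(B'')^T$ for some $A \in H$, so it suffices to exhibit an $H$-invariant of $G := B'(B')^T = \begin{bmatrix} p & r \\ r & q \end{bmatrix}$ that separates the candidates $G_x = \begin{bmatrix} 1 + x^2 & x \\ x & 1 \end{bmatrix}$, $x \geq 0$. A direct computation shows that $r^2/(pq)$ is preserved under both $A = \operatorname{diag}(a,d)$ (giving $G' = \begin{bmatrix} a^2 p & ad r \\ ad r & d^2 q \end{bmatrix}$) and $A = \begin{bmatrix} 0 & b \\ c & 0 \end{bmatrix}$ (giving $G' = \begin{bmatrix} b^2 q & bc r \\ bc r & c^2 p \end{bmatrix}$). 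For $G_x$ this invariant equals $x^2/(1+x^2)$, which is strictly increasing on $[0,\infty)$, so distinct values of $x \geq 0$ yield distinct orbits.

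The main obstacle is bookkeeping for the two components of $H$: the diagonal part accounts for scaling and sign flips of the off-diagonal entry, while the anti-diagonal part could \emph{a priori} identify further representatives. The scalar invariant $r^2/(pq)$ is the convenient tool because it is preserved by both types uniformly, reducing the uniqueness argument to two short congruence calculations. Combining existence and uniqueness delivers the claimed bijection $H \backslash M \cong \left\{\begin{bmatrix} 1 & x \\ 0 & 1 \end{bmatrix} : x \geq 0\right\}$.
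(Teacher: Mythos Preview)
Your argument is correct. The paper does not actually supply its own proof of this lemma; it simply records the statement and cites Van Thuong \cite{van2017metrics} for the proof, so there is no in-paper argument to compare against. Your approach---identifying $M$ with $\operatorname{Tsup}_2$, using the diagonal component of $H$ to reach the normal form $\begin{bmatrix}1&x\\0&1\end{bmatrix}$ with $x\ge 0$, and separating orbits via the Gram-matrix invariant $r^2/(pq)$---is clean and self-contained. The key verifications (that both the diagonal and anti-diagonal elements of $H$ preserve $r^2/(pq)$ under $G\mapsto AGA^T$, and that $x\mapsto x^2/(1+x^2)$ is injective on $[0,\infty)$) are all correct as you state them.
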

\begin{lemma} \label{lemma2.5}
Let $M = \operatorname{GL}(2, \mathbb{R})/\operatorname{O}(2, \mathbb{R})$. Then identifying $\mathbb{R}^{+} = \left\lbrace \begin{bmatrix}
a & 0\\
0 & a
\end{bmatrix} |\quad a \in \mathbb{R}^{+}\right\rbrace.$\\
$\mathbb{R}^{+} \times \operatorname{O}(2, \mathbb{R})$ acts on $M$ by left multiplication, and 
$$\left( \mathbb{R}^{+} \times \operatorname{O}(2, \mathbb{R})\right)\backslash M \cong \left\lbrace \begin{bmatrix}
1 & 0\\
0 & x
\end{bmatrix} |\quad 0 < x \leq 1\right\rbrace.$$
Where $\mathbb{R}^{+}\backslash M$ is identified with the upper half plane.
\end{lemma}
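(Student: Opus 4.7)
The plan is to invoke the singular value decomposition: it encodes precisely the degrees of freedom that the combined $\mathbb{R}^+ \times \operatorname{O}(2)$-action on the left and the $\operatorname{O}(2)$-quotient on the right can eliminate. For existence, given $[A] \in M = \operatorname{GL}(2, \mathbb{R})/\operatorname{O}(2, \mathbb{R})$, I would pick a representative $A$ and write $A = U \Sigma V^T$ with $U, V \in \operatorname{O}(2, \mathbb{R})$ and $\Sigma = \operatorname{diag}(\sigma_1, \sigma_2)$, $\sigma_1 \geq \sigma_2 > 0$. Left multiplication by $U^T \in \operatorname{O}(2) \subset \mathbb{R}^+ \times \operatorname{O}(2)$ sends $[A]$ to $[U^T A] = [\Sigma V^T] = [\Sigma]$ in $M$, since $V^T \in \operatorname{O}(2)$ and the quotient is on the right. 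Then scaling by $1/\sigma_1 \in \mathbb{R}^+$ produces $[\operatorname{diag}(1, \sigma_2/\sigma_1)]$, of the desired form $[\operatorname{diag}(1, x)]$ with $x = \sigma_2/\sigma_1 \in (0, 1]$.

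For uniqueness, the key observation is that the multi-set of singular values is an invariant of $[A] \in M$ (both left and right $\operatorname{O}(2)$-multiplication preserve it), and left scaling by $\lambda I$ rescales it uniformly by $\lambda$. Thus if $[\operatorname{diag}(1, x)]$ and $[\operatorname{diag}(1, x')]$ with $x, x' \in (0, 1]$ lie in the same orbit, there must exist $\lambda > 0$ with $\{1, x\} = \{\lambda, \lambda x'\}$ as multi-sets. Either $\lambda = 1$, forcing $x' = x$ directly, or $\lambda = x$ and $\lambda x' = 1$, which gives $x x' = 1$ and hence $x = x' = 1$ since both parameters are at most $1$. In either case $x = x'$, so the map from the slice $\{\operatorname{diag}(1, x) : 0 < x \leq 1\}$ to the orbit space is a bijection.

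Because the whole argument reduces to the SVD plus a two-line multi-set comparison, no single step should be a serious obstacle. The main piece of bookkeeping is keeping track of whether the quotient $M = \operatorname{GL}(2)/\operatorname{O}(2)$ is by left or right multiplication by $\operatorname{O}(2)$, which determines the direction in which the $V^T$ factor in the SVD gets absorbed; once that convention is fixed (as in step 4 of the paper's method), the argument goes through cleanly. An alternative, essentially equivalent route is to identify $M$ with the cone $S_2^+$ of positive definite symmetric matrices via $[A] \mapsto AA^T$, translating the action into conjugation and dilation and then applying the spectral theorem, but the SVD formulation above absorbs this step into a single invocation.
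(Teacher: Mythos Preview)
Your proof is correct. The paper itself does not prove this lemma; it simply cites Van~Thuong's paper, so there is no in-text argument to compare against directly. The sentence ``Where $\mathbb{R}^{+}\backslash M$ is identified with the upper half plane'' in the lemma statement suggests that the original proof proceeds by passing to the symmetric-space model (positive definite symmetric matrices, then the upper half plane after scaling) and analysing the residual $\operatorname{O}(2)$-action there---essentially the alternative route you sketch in your final paragraph. Your SVD argument short-circuits this by handling both orthogonal factors and the scaling in one step, and the multi-set comparison for uniqueness is clean; nothing is missing.
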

Now we are in the point to start our classification of inner products on 4-dimensional nonunimodular Lie algebras. We start with de decomposable ones.
\section{Decomposable nonunimodular 4-dimensional Lie algebras}
In this section, we classify the left invariant Riemannian metrics on all the decomposable Lie algebras $\mathfrak{g}$ from table 1.
\subsection{The Lie algebra $\operatorname{A}_2 \oplus \operatorname{2A}_1$}
The Lie algebra $\mathfrak{g} = \operatorname{A}_2 \oplus \operatorname{2A}_1$ has a basis $\mathcal{B} = \left\lbrace e_1, e_2, e_3, e_4\right\rbrace$ such that the only nonzero bracket is $[e_1, e_2] = e_2$.\\
The automorphism group of the Lie algebra $\operatorname{A}_2 \oplus \operatorname{2A}_1$ consists of elements of $\operatorname{GL}(4, \mathbb{R})$ of the form \cite{christodoulakis2003automorphisms}
$$\begin{bmatrix}
1 & 0 & 0 & 0\\
a_5 & a_6 & 0 & 0\\
a_9 & 0 & a_{11} & a_{12}\\
a_{13} & 0 & a_{15} & a_{16}
\end{bmatrix}.
$$
\begin{theorem}
Every metric on $\operatorname{A}_2 \oplus \operatorname{2A}_1$ is equivalent to one defined by an orthonormal basis
$$X_1 = b_{11}e_1, \quad X_2 = b_{12}e_1 + e_2, \quad X_3 = b_{13}e_1 + b_{23}e_2 + e_3, \quad X_4 = b_{24}e_2 + e_{4}
$$
where $b_{11} > 0$, \quad $b_{12}, b_{13} \geq 0$ and $b_{23}, b_{24} \in \mathbb{R}$.
\end{theorem}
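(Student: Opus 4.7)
The plan is to reduce an arbitrary $B' \in \operatorname{Tsup}_4$ representing a metric to the stated normal form by a sequence of moves $B' \mapsto A B' U^{-1}$ with $A \in \operatorname{Aut}(\mathfrak g)$ and $U \in \operatorname{O}(4,\mathbb R)$. It is convenient to work with the symmetric positive definite matrix $S := B' (B')^T$, because the action becomes $S \mapsto A S A^T$ while the unique $B'' \in \operatorname{Tsup}_4$ with $B''(B'')^T = A S A^T$ is the reduced basis; the normal form requirements $b''_{33} = b''_{44} = 1$ and $b''_{14} = b''_{34} = 0$ translate into explicit scalar equations on the entries of $A S A^T$.

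First I would carry out the diagonal normalization. The $\operatorname{GL}(2,\mathbb R)$-subgroup of $\operatorname{Aut}(\mathfrak g)$ acting on $\operatorname{span}(e_3, e_4)$ (the entries $a_{11}, a_{12}, a_{15}, a_{16}$, the rest trivial) leaves rows $1$ and $2$ of $B'$ unchanged; choosing $a_{15} = 0$, $a_{11} = 1/b_{33}$, $a_{16} = 1/b_{44}$, $a_{12} = -b_{34}/(b_{33} b_{44})$ turns the bottom right $2 \times 2$ block of $B'$ into $I_2$. Then the scaling $a_6 = 1/b_{22}$ yields $b_{22} = 1$, giving
\[
B' = \begin{bmatrix} b_{11} & b_{12} & b_{13} & b_{14}\\ 0 & 1 & b_{23} & b_{24}\\ 0 & 0 & 1 & 0\\ 0 & 0 & 0 & 1\end{bmatrix}.
\]

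The decisive step is to kill $b_{14}$ while keeping $b_{33} = b_{44} = 1$ and $b_{34} = 0$. Take $A$ with $a_5 = a_9 = a_{12} = 0$ and $a_6 = a_{11} = 1$, so that rows $1, 2, 3$ of $A$ are $e_1^T, e_2^T, e_3^T$ and row $4$ is $(a_{13}, 0, a_{15}, a_{16})$. The three conditions $(ASA^T)_{34} = 0$, $(ASA^T)_{14} = 0$, $(ASA^T)_{44} = 1$ form a triangular system in the unknowns $a_{13}, a_{15}, a_{16}$:
\[
a_{15} = -a_{13} b_{13}, \qquad a_{13} = -\frac{a_{16} b_{14}}{b_{11}^2 + b_{12}^2 + b_{14}^2}, \qquad a_{16}^2 = \frac{b_{11}^2 + b_{12}^2 + b_{14}^2}{b_{11}^2 + b_{12}^2}.
\]
The last equation admits a positive solution because $b_{11} > 0$, and the bottom right $2 \times 2$ block of $A$ then has determinant $a_{11} a_{16} - a_{12} a_{15} = a_{16} > 0$, so $A \in \operatorname{Aut}(\mathfrak g)$. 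The factor $B''$ of $ASA^T$ then satisfies $b''_{33} = b''_{44} = 1$, $b''_{14} = b''_{34} = 0$, $b''_{13} = b_{13}$, $b''_{23} = b_{23}$, and one last scaling $\operatorname{diag}(1, 1/b''_{22}, 1, 1)$ restores $b''_{22} = 1$ while touching only row $2$.

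It remains to adjust signs. The automorphisms $\operatorname{diag}(1,-1,1,1)$ and $\operatorname{diag}(1,1,-1,1)$ both lie in $\operatorname{Aut}(\mathfrak g)$; composing each with the matching $U = \operatorname{diag}(1,-1,1,1)$, respectively $\operatorname{diag}(1,1,-1,1)$, in $\operatorname{O}(4)$ flips the sign of $b_{12}$ (together with $b_{23}, b_{24}$), respectively of $b_{13}$ (together with $b_{23}$), while preserving the diagonal entries and the vanishings $b_{14} = b_{34} = 0$; this forces $b_{12}, b_{13} \geq 0$. The main obstacle is the middle step: one must verify that the three-equation system always admits a positive solution inside $\operatorname{Aut}(\mathfrak g)$, and that the algebraic cancellations leading to the displayed simple forms (the $a_{13} b_{13}^2$ contribution to $(ASA^T)_{14}$ disappears after substituting $a_{15}$, and $(ASA^T)_{44}$ collapses to $a_{16}^2 (b_{11}^2 + b_{12}^2)/(b_{11}^2 + b_{12}^2 + b_{14}^2)$) really occur as claimed.
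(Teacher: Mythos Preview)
Your argument is correct. The computations in the ``decisive step'' do work out as you claim: after substituting $a_{15}=-a_{13}b_{13}$ the $b_{13}^2$--terms in $(ASA^T)_{14}$ cancel, and $(ASA^T)_{44}$ reduces to $a_{16}^2(b_{11}^2+b_{12}^2)/(b_{11}^2+b_{12}^2+b_{14}^2)$, so the system is solvable with $a_{16}>0$ and hence $A\in\operatorname{Aut}(\mathfrak g)$. The sign adjustments at the end are also valid, since the two diagonal automorphisms you use commute on the relevant entries ($b_{13}$ is untouched by the $b_{12}$--flip).

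Your route, however, differs from the paper's. The paper exploits the $\operatorname{O}(2)$--subgroup $\begin{bmatrix}a_{11}&a_{12}\\a_{15}&a_{16}\end{bmatrix}$ of $\operatorname{Aut}(\mathfrak g)$ acting on $\operatorname{span}(e_3,e_4)$: from the orthogonality of $C^{-1}AB$ one reads off that the vectors $(b_{13},b_{14})$ and $(c_{13},c_{14})$ lie in the same $\operatorname{O}(2)$--orbit, so a single rotation puts $b_{14}=0$ and $b_{13}\ge 0$ simultaneously. You instead leave that $\operatorname{O}(2)$ block alone and use the lower--left parameter $a_{13}$ (together with $a_{15},a_{16}$) to solve directly for $(ASA^T)_{14}=0$, then treat the sign of $b_{13}$ separately and re--normalise $b_{22}$ afterwards. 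The paper's approach is shorter and more conceptual (it is the standard orbit argument for $\operatorname{O}(2)$ on $\mathbb{R}^2$); yours is more explicit and constructive but pays for it with the algebraic verifications you flag and the extra clean--up step on row~$2$.
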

\begin{proof}
Consider $B'$ to be an orthonormal basis of the form (\ref{basis}). We can find $A \in \operatorname{Aut}(\mathfrak{g})$, such that $AB' = B$, where
\begin{equation}
B = \begin{bmatrix} \label{form1}
b_{11} & b_{12} & b_{13} & b_{14}\\
0 & 1 & b_{23} & b_{24}\\
0 & 0 & 1 & 0\\
0 & 0 & 0 & 1\\
\end{bmatrix}, \quad b_{11} > 0.
\end{equation}
Now, we must decide if any two orthonormal bases $B = (b_{ij})$, $C = (c_{ij})$ of the form (\ref{form1}) are equivalent. We remark that the automorphism group of $\operatorname{A}_2 \oplus \operatorname{2A}_1$ contains the elements of the form $\begin{bmatrix}
1 & 0 & 0 & 0\\
0 & a_6 & 0 & 0\\
0 & 0 & a_{11} & a_{12}\\
0 & 0 & a_{15} & a_{16}\\
\end{bmatrix}$. 
Let $A$ be an automorphism of $\operatorname{A}_2 \oplus \operatorname{2A}_1$ of this form, we calculate
$$C^{-1}AB = \begin{bmatrix}
\frac{b_{11}}{c_{11}} & \frac{x}{c_{11}} & \frac{y}{c_{11}} & \frac{z}{c_{11}}\\
\\
0 & a_6 & u & v\\
\\
0 & 0 & a_{11} & a_{12}\\
\\
0 & 0 & a_{15} & a_{16}
\end{bmatrix}.$$
Where the elements $x, y, z, u$ and $v$ are given by
$$
\left\lbrace\begin{array}{lll}
x = b_{12} - a_{6}c_{12} \\
y = b_{13} - a_{6}c_{12}b_{23} + (c_{12}c_{23} - c_{13})a_{11} + (c_{12}c_{24} - c_{14})a_{15} \\
z = b_{14} - a_{6}c_{12}b_{24} + (c_{12}c_{23} - c_{13})a_{12} + (c_{12}c_{24} - c_{14})a_{16}\\
u = a_6b_{23} - a_{11}c_{23} - a_{15}c_{24}\\
v = a_6b_{24} - a_{12}c_{23} - a_{16}c_{24}
\end{array}\right.
$$
The matrix $C^{-1}AB$ is orthogonal precisely when the block $\begin{bmatrix}
a_{11} & a_{12}\\
a_{15} & a_{16}
\end{bmatrix}$ is orthogonal,\\ $b_{11} = c_{11}$, $x = y = z = u = v = 0$ and $a_{6} = \pm1$. We remark that
$$
\left\lbrace\begin{array}{lll}
y = b_{13} - c_{12}u - a_{11}c_{13} - a_{15}c_{14} \\
z = b_{14} - c_{12}v - a_{12}c_{13} - a_{16}c_{14}
\end{array}\right.
$$
Therefore we obtain that
\begin{eqnarray*}
\begin{bmatrix}
u\\
v
\end{bmatrix} = \begin{bmatrix}
0\\
0
\end{bmatrix} &\Leftrightarrow& a_6\begin{bmatrix}
b_{23}\\
b_{24}
\end{bmatrix} = \begin{bmatrix}
a_{11} & a_{15}\\
a_{12} & a_{16}
\end{bmatrix}\begin{bmatrix}
c_{23}\\
c_{24}
\end{bmatrix}\\
\begin{bmatrix}
y\\
z
\end{bmatrix} = \begin{bmatrix}
0\\
0
\end{bmatrix} &\Leftrightarrow& \hspace{0.48cm} \begin{bmatrix}
b_{13}\\
b_{14}
\end{bmatrix} = \begin{bmatrix}
a_{11} & a_{15}\\
a_{12} & a_{16}
\end{bmatrix}\begin{bmatrix}
c_{13}\\
c_{14}
\end{bmatrix}.
\end{eqnarray*}
We can choose an orthogonal matrix $\begin{bmatrix}
a_{11} & a_{12} \\
a_{15} & a_{16}
\end{bmatrix}$
such that its transpose maps the vector $(c_{13}, c_{14})$ to a point on the nonnegative $x$-axis. Therefore, we may assume that $b_{14} = 0$ and $b_{13} \geq 0$. However, with that choice, we cannot control the vector $a_6(b_{23}, b_{24})$. Still, by choice of $a_6$, we can ensure that $b_{12} \geq 0$.
\end{proof}
\begin{remark}
If we consider the Riemannian metrics as lower triangular matrices with positive diagonal entries, then we can reduce the dimension of the moduli space of the Lie algebra $\operatorname{A}_2 \oplus \operatorname{2A}_1$ to $\operatorname{dim}(\mathfrak{M}) = 3$. Because we can find an automorphism of  $\operatorname{A}_2 \oplus \operatorname{2A}_1$ such that
$$\begin{bmatrix}
1 & 0 & 0 & 0\\
a_5 & a_6 & 0 & 0\\
a_9 & 0 & a_{11} & a_{12}\\
a_{13} & 0 & a_{15} & a_{16}
\end{bmatrix}\begin{bmatrix}
b_{11} & 0 & 0 & 0\\
b_{21} & b_{22} & 0 & 0\\
b_{31} & b_{32} & b_{33} & 0\\
b_{41} & b_{42} & b_{43} & b_{44}
\end{bmatrix} = \begin{bmatrix}
\ast & 0 & 0 & 0\\
0 & 1 & 0 & 0\\
0 & \ast & 1 & 0\\
0 & \ast & 0 & 1
\end{bmatrix}.$$
\end{remark}
\subsection{The Lie algebra $\operatorname{2A}_2$}
The Lie algebra $\operatorname{2A}_2$ has a basis $\mathcal{B} = \left\lbrace e_1, e_2, e_3, e_4\right\rbrace$ such that the only nonzero brackets are 
$$[e_1, e_2] = e_2, \qquad [e_3, e_4] = e_4.$$
The automorphism group of the Lie algebra $\operatorname{2A}_2$ is formed by elements of $\operatorname{GL}(4, \mathbb{R})$ of the form \cite{christodoulakis2003automorphisms}
$$\begin{bmatrix}
1 & 0 & 0 & 0\\
a_5 & a_6 & 0 & 0\\
0 & 0 & 1 & a_{12}\\
0 & 0 & a_{15} & a_{16}
\end{bmatrix} \quad \text{or}\quad \begin{bmatrix}
0 & 0 & 1 & 0\\
0 & 0 & a_{7} & a_{8}\\
1 & 0 & 0 & 0\\
a_{13} & a_{14} & 0 & 0
\end{bmatrix}.
$$
\begin{theorem}
Every metric on $\operatorname{2A}_2$ is equivalent to one defined by an orthonormal basis
$$X_1 = b_{11}e_1, \quad X_2 = b_{12}e_1 + e_2, \quad X_3 = b_{13}e_1 + b_{23}e_2 + b_{33}e_3, \quad X_4 = b_{14}e_1 + b_{24}e_2 + e_{4}
$$
where $b_{11}, b_{33} > 0, \quad b_{12}, b_{14} \geq 0, \quad b_{13}, b_{23},  b_{24} \in \mathbb{R}$.
\end{theorem}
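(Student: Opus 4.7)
The plan is to mimic the reduction carried out just above for $\operatorname{A}_2\oplus\operatorname{2A}_1$, working only with the identity component of $\operatorname{Aut}(\operatorname{2A}_2)$ (the first family of matrices listed). The second ``swap'' component is not needed for this existence statement: it would only be relevant for deciding uniqueness of the normal form.

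Starting from a generic orthonormal basis $B'$ written in the upper-triangular form (\ref{basis}), I first apply the automorphism
$$A=\begin{bmatrix}1 & 0 & 0 & 0\\ 0 & 1/b'_{22} & 0 & 0\\ 0 & 0 & 1 & -b'_{34}/b'_{44}\\ 0 & 0 & 0 & 1/b'_{44}\end{bmatrix}.$$
A direct row-by-row computation shows that $AB'$ is still upper triangular with $(AB')_{22}=(AB')_{44}=1$ and $(AB')_{34}=0$, while $(AB')_{11}=b'_{11}>0$ and $(AB')_{33}=b'_{33}>0$. Relabeling the above-diagonal entries as $b_{12},b_{13},b_{14},b_{23},b_{24}$, this places every metric in the shape required by the theorem, except that $b_{12}$ and $b_{14}$ may still have arbitrary sign.

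To force $b_{12},b_{14}\geq 0$, I use the diagonal automorphism $A_{\varepsilon}=\operatorname{diag}(1,\varepsilon_2,1,\varepsilon_4)$ with $\varepsilon_2,\varepsilon_4\in\{\pm 1\}$, which lies in the first component. Left multiplication by $A_{\varepsilon}$ flips the sign of the $(2,2)$ and $(4,4)$ diagonal entries, and right multiplication by the orthogonal matrix $\operatorname{diag}(1,\varepsilon_2,1,\varepsilon_4)$ restores positive diagonal. The net effect on the normalized matrix is to scale the $(1,2)$ entry by $\varepsilon_2$, the $(1,4)$ entry by $\varepsilon_4$, the $(2,3)$ entry by $\varepsilon_2$, and the $(2,4)$ entry by $\varepsilon_2\varepsilon_4$. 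Choosing $\varepsilon_2$ to be the sign of $b_{12}$ and $\varepsilon_4$ to be the sign of $b_{14}$ (using $+1$ whenever either vanishes) ensures $b_{12},b_{14}\geq 0$; the possible sign flips on $b_{23}$ and $b_{24}$ are harmless, since those entries are unrestricted real numbers.

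I do not anticipate a serious obstacle. The one structural remark worth making is that every identity-component automorphism of $\operatorname{2A}_2$ has $(1,1)$ and $(3,3)$ entries rigidly equal to $1$, reflecting the fact that the ``generators'' $e_1$ and $e_3$ of the two $\operatorname{A}_2$-summands cannot be rescaled once the relations $[e_1,e_2]=e_2$ and $[e_3,e_4]=e_4$ are fixed; this is exactly why $b_{11}$ and $b_{33}$ persist as two free positive moduli in the reduced form, in contrast with $b_{22}$ and $b_{44}$, which are normalized to $1$. Beyond the two matrix computations described above, no further case analysis or orbit matching is required.
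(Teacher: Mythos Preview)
Your proof is correct and follows essentially the same route as the paper: first use an upper triangular automorphism to normalize $b_{22}=b_{44}=1$ and $b_{34}=0$, then use the diagonal automorphisms $\operatorname{diag}(1,\pm1,1,\pm1)$ to arrange $b_{12},b_{14}\geq 0$. The only stylistic difference is that the paper carries out the second step via the $C^{-1}AB$ orthogonality test (which also records the residual equivalences $b_{11}=c_{11}$, $b_{33}=c_{33}$, $b_{13}=c_{13}$, $a_6b_{23}=c_{23}$, $a_6b_{24}=a_{16}c_{24}$), whereas you give the sign-flip directly; for the existence statement in the theorem your version is entirely adequate.
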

\begin{proof}
Applying an upper triangular automorphism of $\operatorname{2A}_2$, every metric on $\operatorname{2A}_2$ is equivalent to one determined by orthonormal basis
\begin{equation} \label{form2}
 \begin{bmatrix}
b_{11} & b_{12} & b_{13} & b_{14}\\
0 & 1 & b_{23} & b_{24}\\
0 & 0 & b_{33} & 0\\
0 & 0 & 0 & 1\\
\end{bmatrix}, \quad b_{11}, b_{33} > 0.
\end{equation}
Now, we must decide if any two orthonormal bases $B = (b_{ij})$, $C = (c_{ij})$ of the form (\ref{form2}) are equivalent. By choosing a diagonal automorphism $A$ of $\mathfrak{g}$, we obtain that
$$C^{-1}AB = \begin{bmatrix}
\frac{b_{11}}{c_{11}} & \frac{x}{c_{11}} & \frac{y}{c_{11}} & \frac{z}{c_{11}}\\
0 & a_6 & u & v\\
0 & 0 & \frac{b_{33}}{c_{33}} & 0\\
0 & 0 & 0 & a_{16}
\end{bmatrix}.$$
Where the elements $x, y, z, u$ and $v$ are given by
$$
\left\lbrace\begin{array}{lll}
x = b_{12} - a_{6}c_{12} \\
y = b_{13} - a_6c_{12}b_{23} + (c_{12}c_{23} - c_{13})\frac{b_{33}}{c_{33}} \\
z = b_{14} - a_{6}c_{12}b_{24} + (c_{12}c_{24} - c_{14})a_{16}\\
u = a_6b_{23} - c_{23}\frac{b_{33}}{c_{33}}\\
v = a_6b_{24} - a_{16}c_{24}
\end{array}\right.
$$
The matrix $C^{-1}AB$ is orthogonal precisely when $b_{11} = c_{11}$, $b_{33} = c_{33}$, $a_{6}$ and $a_{16}$ are equal to $\pm1$ and $x = y = z = u = v = 0$. This implies that
$$
\left\lbrace\begin{array}{lll}
b_{12} =  a_{6}c_{12} \\
b_{14} =  a_{16}c_{14}\\
b_{13} =  c_{13} \\
a_6b_{24} = a_{16}c_{24} \\
a_6b_{23} = c_{23}
\end{array}\right.
$$
By choice of $a_{6}$ and $a_{16}$, any orthonormal basis is equivalent to one with $b_{12}, b_{14} \geq 0$ and $b_{13}, b_{23}, b_{24} \in \mathbb{R}$.
\end{proof}
\subsection{The Lie algebra $\operatorname{A}_{3, 2} \oplus \operatorname{A}_1$}
The Lie algebra $\operatorname{A}_{3, 2} \oplus \operatorname{A}_1$ has a basis $\mathcal{B} = \left\lbrace e_1, e_2, e_3, e_4\right\rbrace$ such that the nonzero brackets are 
$$[e_1, e_3] = e_1, \quad [e_2, e_3] = e_1 + e_2.$$
The group of automorphisms of the Lie algebra $\operatorname{A}_{3, 2} \oplus \operatorname{A}_1$ is given by \cite{christodoulakis2003automorphisms}
$$\operatorname{Aut}\left( \operatorname{A}_{3, 2} \oplus \operatorname{A}_1 \right) = \left\lbrace \begin{bmatrix}
a_1 & a_2 & a_3 & 0\\
0 & a_1 & a_7 & 0\\
0 & 0 & 1 & 0\\
0 & 0 & a_{15} & a_{16}
\end{bmatrix} \right\rbrace  \subset \operatorname{GL}(4, \mathbb{R}).
$$
\begin{theorem}
Every metric on $\operatorname{A}_{3, 2} \oplus \operatorname{A}_1$ is equivalent to one defined by an orthonormal basis
$$X_1 = b_{11}e_1, \quad X_2 = e_2, \quad X_3 = b_{33}e_3, \quad X_4 = b_{14}e_1 + b_{24}e_2 + b_{34}e_3 + e_{4}
$$
where $b_{11}, b_{33} > 0, \quad b_{24}, b_{34} \geq 0, \quad b_{14} \in \mathbb{R}$.
\end{theorem}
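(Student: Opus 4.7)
My plan is to apply a suitable automorphism to convert an arbitrary upper-triangular orthonormal basis into the prescribed block shape, and then use sign-flip automorphisms composed with orthogonal matrices on the right to arrange the nonnegativity of $b_{24}$ and $b_{34}$.

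Starting with an upper-triangular orthonormal basis $B' = (b'_{ij})$ with positive diagonal, I would choose $A \in \operatorname{Aut}(\mathfrak{g})$ as follows. Since the $(4,3)$ entry of $AB'$ equals $a_{15} b'_{33}$, upper triangularity of $AB'$ forces $a_{15} = 0$. I would then set $a_1 = 1/b'_{22}$ and $a_{16} = 1/b'_{44}$ to normalize the $(2,2)$ and $(4,4)$ entries of $AB'$ to $1$, and choose $a_7 = -b'_{23}/(b'_{22} b'_{33})$, $a_2 = -b'_{12}/b'_{22}^{2}$, and $a_3$ determined by the vanishing of the $(1,3)$ entry. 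Direct matrix multiplication then confirms that $AB'$ is in the target shape, with $b_{11} = b'_{11}/b'_{22} > 0$, $b_{33} = b'_{33} > 0$, and with $b_{14}, b_{24}, b_{34}$ some real numbers determined by $B'$.

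For the sign constraints, I would use the diagonal automorphisms $A_1 = \operatorname{diag}(-1,-1,1,1)$ (taking $a_1 = -1$ and the other free parameters zero) and $A_2 = \operatorname{diag}(1,1,1,-1)$ (taking $a_{16} = -1$), each composed on the right with the same matrix viewed as an element of $\operatorname{O}(4)$ so as to restore positive diagonal entries. A short calculation shows that the combined action of $A_1$ flips the signs of $b_{14}$ and $b_{24}$ while leaving $b_{34}$ fixed, and the combined action of $A_2$ flips all three of $b_{14}, b_{24}, b_{34}$. By choosing among identity, $A_1$, $A_2$, or $A_1 A_2$ we can simultaneously ensure $b_{24} \geq 0$ and $b_{34} \geq 0$; the sign of $b_{14}$ may be flipped along the way, but this is harmless since the statement allows $b_{14} \in \mathbb{R}$.

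I do not anticipate a genuine conceptual obstacle: the automorphism group is rich enough to zero out the off-diagonal entries $(1,2), (1,3), (2,3)$ and to supply the needed sign flips, while being restrictive enough (notably, $a_{15}$ is forced to vanish) that $b_{11}, b_{33}, b_{14}, b_{24}, b_{34}$ remain free parameters, consistent with $\dim \mathfrak{M} = 5$ from Table~2. The main technical step is simply the block-by-block computation verifying the sign-flip actions described above.
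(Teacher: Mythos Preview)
Your proposal is correct and follows essentially the same approach as the paper: first use an upper triangular automorphism (with $a_{15}=0$) to reduce $B'$ to the block form with $b_{12}=b_{13}=b_{23}=0$, $b_{22}=b_{44}=1$, and then use the diagonal automorphisms with $a_1,a_{16}\in\{\pm 1\}$ to control signs. The paper carries out the second step by computing $C^{-1}AB$ for diagonal $A$ and reading off the relations $a_1 b_{14}=a_{16}c_{14}$, $a_1 b_{24}=a_{16}c_{24}$, $b_{34}=a_{16}c_{34}$, while you compute the explicit action of $A_1=\operatorname{diag}(-1,-1,1,1)$ and $A_2=\operatorname{diag}(1,1,1,-1)$ on $(b_{14},b_{24},b_{34})$; these are the same four sign patterns and lead to the same conclusion that $b_{24},b_{34}$ can be made nonnegative while $b_{14}$ remains unconstrained.
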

\begin{proof}
By left multiplication by an upper triangular automorphism, every metric on $\operatorname{A}_{3, 2} \oplus \operatorname{A}_1$ is equivalent to one determined by orthonormal basis
\begin{equation} 
B = \begin{bmatrix} \label{form4}
b_{11} & 0 & 0 & b_{14}\\
0 & 1 & 0 & b_{24}\\
0 & 0 & b_{33} & b_{34}\\
0 & 0 & 0 & 1\\
\end{bmatrix}, \quad b_{11}, b_{33} > 0. 
\end{equation}
Now, we must decide if any two orthonormal bases $B = (b_{ij})$, $C = (c_{ij})$ of the form (\ref{form4}) are equivalent. By choosing a diagonal automorphism $A$ of $\mathfrak{g}$, we calculate
$$C^{-1}AB = \begin{bmatrix}
\frac{a_{1}b_{11}}{c_{11}} & 0 & 0 & \frac{a_1b_{14} - a_{16}c_{14}}{c_{11}}\\
\\
0 & a_{1} & 0 & a_1b_{24} - a_{16}c_{24}\\
\\
0 & 0 & \frac{b_{33}}{c_{33}} & \frac{b_{34} - a_{16}c_{34}}{c_{33}}\\
\\
0 & 0 & 0 & a_{16}
\end{bmatrix}.$$
The matrix $C^{-1}AB$ is orthogonal precisely when $a_1, a_{16} = \pm1$, $b_{11} = c_{11}$,  $b_{33} = c_{33}$ and
$$
\left\lbrace\begin{array}{lll}
a_1b_{14} = a_{16}c_{14} \\
a_1b_{24} = a_{16}c_{24}\\
b_{34} = a_{16}c_{34}
\end{array}\right.
$$
By choice of $a_1$ and $a_{16}$ we can put $b_{34}, b_{24} \geq 0$, but we have $b_{14} \in \mathbb{R}$.
\end{proof}
\subsection{The Lie algebra $\operatorname{A}_{3, 3} \oplus \operatorname{A}_1$}
The Lie algebra $\operatorname{A}_{3, 3} \oplus \operatorname{A}_1$ has a basis $\mathcal{B} = \left\lbrace e_1, e_2, e_3, e_4\right\rbrace$ such that the nonzero brackets are 
$$[e_1, e_3] = e_1, \quad [e_2, e_3] = e_2.$$
The automorphisms of the Lie algebra $\operatorname{A}_{3, 3} \oplus \operatorname{A}_1$ are given by elements of $\operatorname{GL}(4, \mathbb{R})$ that take the form \cite{christodoulakis2003automorphisms}
$$\begin{bmatrix}
a_1 & a_2 & a_3 & 0\\
a_5 & a_6 & a_7 & 0\\
0 & 0 & 1 & 0\\
0 & 0 & a_{15} & a_{16}
\end{bmatrix}.
$$
\begin{theorem}
Every metric on $\operatorname{A}_{3, 3} \oplus \operatorname{A}_1$ is equivalent to one defined by an orthonormal basis
$$X_1 = e_1, \quad X_2 = e_2, \quad X_3 = b_{33}e_3, \quad X_4 = b_{14}e_1 + b_{34}e_3 + e_{4}
$$
where $b_{33} > 0, \quad b_{14}, b_{34} \geq 0$.
\end{theorem}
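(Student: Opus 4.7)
The plan is to follow the two-step approach used in the preceding subsections. First, I would apply an upper triangular automorphism to reduce any orthonormal basis $B'$ of the form (\ref{basis}) to a normalized representative. The upper triangular subgroup of $\operatorname{Aut}(\mathfrak{g})$ has three independent nonzero diagonal parameters $a_1,a_6,a_{16}$ (the $(3,3)$ entry is forced to be $1$) together with three free strictly-upper entries $a_2,a_3,a_7$; these suffice to set $b_{11}=b_{22}=b_{44}=1$ and to kill $b_{12},b_{13},b_{23}$. The reduced basis then takes the form
\begin{equation*}
B = \begin{bmatrix} 1 & 0 & 0 & b_{14} \\ 0 & 1 & 0 & b_{24} \\ 0 & 0 & b_{33} & b_{34} \\ 0 & 0 & 0 & 1 \end{bmatrix}, \quad b_{33}>0.
\end{equation*}

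Second, I would compare two such bases $B=(b_{ij})$ and $C=(c_{ij})$ by taking $A\in\operatorname{Aut}(\mathfrak{g})$ block diagonal, with blocks $\bar A=\begin{bmatrix} a_1 & a_2 \\ a_5 & a_6 \end{bmatrix}\in\operatorname{GL}(2,\mathbb{R})$ and $\operatorname{diag}(1,a_{16})$ (namely $a_3=a_7=a_{15}=0$), and then require $C^{-1}AB\in \operatorname{O}(4,\mathbb{R})$. Since the first two columns of $C^{-1}AB$ are supported only in rows $1$ and $2$, orthogonality forces $\bar A\in \operatorname{O}(2,\mathbb{R})$; the $(3,3)$ and $(4,4)$ entries yield $b_{33}=c_{33}$ and $a_{16}=\pm 1$; the $(3,4)$ entry then gives $b_{34}=a_{16}c_{34}$; and vanishing of the $(1,4)$ and $(2,4)$ entries produces
\begin{equation*}
\bar A \begin{pmatrix} b_{14} \\ b_{24} \end{pmatrix} = a_{16}\begin{pmatrix} c_{14} \\ c_{24} \end{pmatrix}.
\end{equation*}

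To finish, I would choose the sign of $a_{16}$ so that $b_{34}\geq 0$, and exploit the $\operatorname{O}(2,\mathbb{R})$ freedom in $\bar A$ to rotate $(b_{14},b_{24})$ onto the nonnegative first axis; this simultaneously enforces $b_{24}=0$ and $b_{14}\geq 0$, leaving the three free parameters $b_{33}>0$ and $b_{14},b_{34}\geq 0$ claimed in the statement.

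The main technical step is the bookkeeping in the orthogonality analysis of $C^{-1}AB$, but it is essentially the same computation as in the proof for $\operatorname{A}_{3,2}\oplus\operatorname{A}_1$, with one crucial difference: the upper-left $\operatorname{GL}(2,\mathbb{R})$ block of $\operatorname{Aut}(\mathfrak{g})$ now carries a genuine $\operatorname{O}(2,\mathbb{R})$ action on the pair $(b_{14},b_{24})$, whereas in the previous case the two diagonal entries of that block were constrained to be equal. This extra freedom is exactly what allows us to normalize $b_{24}$ to zero, a step that was not available in the earlier subsection.
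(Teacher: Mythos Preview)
Your proposal is correct and follows essentially the same approach as the paper: reduce $B'$ via an upper triangular automorphism to the form with $b_{11}=b_{22}=b_{44}=1$ and $b_{12}=b_{13}=b_{23}=0$, then use the residual block-diagonal automorphisms with $\bar A\in\operatorname{GL}(2,\mathbb{R})$ and $\operatorname{diag}(1,a_{16})$ to analyze $C^{-1}AB\in\operatorname{O}(4,\mathbb{R})$, obtaining exactly the conditions $b_{33}=c_{33}$, $a_{16}=\pm1$, $b_{34}=a_{16}c_{34}$, and $\bar A(b_{14},b_{24})^T=a_{16}(c_{14},c_{24})^T$ with $\bar A\in\operatorname{O}(2,\mathbb{R})$. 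Your observation that the full $\operatorname{O}(2,\mathbb{R})$ freedom here (as opposed to the scalar diagonal in $\operatorname{A}_{3,2}\oplus\operatorname{A}_1$) is what allows the extra normalization $b_{24}=0$ is exactly the point.
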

\begin{proof}
Applying an upper triangular automorphism, every metric on $\operatorname{A}_{3, 3} \oplus \operatorname{A}_1$ is equivalent to one determined by orthonormal basis
\begin{equation} \label{form5}
\begin{bmatrix}
1 & 0 & 0 & b_{14}\\
0 & 1 & 0 & b_{24}\\
0 & 0 & b_{33} & b_{34}\\
0 & 0 & 0 & 1\\
\end{bmatrix}, \quad b_{33} > 0. 
\end{equation}
We remark that the automorphism group of $\operatorname{A}_{3, 3} \oplus \operatorname{A}_1$ contains the elements of the form $\begin{bmatrix}
a_1 & a_2 & 0 & 0\\
a_5 & a_6 & 0 & 0\\
0 & 0 & 1 & 0\\
0 & 0 & 0 & a_{16}\\
\end{bmatrix}$. 
Let $A$ be an automorphism of $\operatorname{A}_{3, 3} \oplus \operatorname{A}_1$ of this form, given two orthonormal bases $B = (b_{ij})$ and $C = (c_{ij})$ of the form (\ref{form5}), we calculate
$$C^{-1}AB = \begin{bmatrix}
a_{1} & a_2 & 0 & a_1b_{14} + a_2b_{24} - a_{16}c_{14}\\
\\
a_5 & a_6 & 0 & a_5b_{14} + a_6b_{24} - a_{16}c_{24}\\
\\
0 & 0 & \frac{b_{33}}{c_{33}} & \frac{b_{34} - a_{16}c_{34}}{c_{33}}\\
\\
0 & 0 & 0 & a_{16}
\end{bmatrix}.$$
This matrix is orthogonal precisely when the upper $2 \times 2$ block is orthogonal, $a_{16} = \pm1$,  $b_{33} = c_{33}$, $b_{34} = a_{16}c_{34}$ and
$$\begin{bmatrix}
a_1 & a_2\\
a_5 & a_6
\end{bmatrix}\begin{bmatrix}
b_{14}\\
b_{24}
\end{bmatrix} = \pm\begin{bmatrix}
c_{14}\\
c_{24}
\end{bmatrix}, \quad \text{where}\; \begin{bmatrix}
a_1 & a_2\\
a_5 & a_6
\end{bmatrix} \in \operatorname{O}(2, \mathbb{R}).$$
This means that $(b_{14}, b_{24})$ and $(c_{14}, c_{24})$ are in the same orbit of the natural action of $\operatorname{O}(2, \mathbb{R})$ on $\mathbb{R}^2$. Therefore we can put $b_{24} = 0$ and $b_{14} \geq 0$. By choice of $a_{16}$ we can take $b_{34} \geq 0$.
\end{proof}
\subsection{The Lie algebra $\operatorname{A}_{3, 5}^{\alpha} \oplus \operatorname{A}_1$}
The Lie algebra $\operatorname{A}_{3, 5}^{\alpha} \oplus \operatorname{A}_1$ has a basis $\mathcal{B} = \left\lbrace e_1, e_2, e_3, e_4\right\rbrace$ such that the nonzero brackets are 
$$[e_1, e_3] = e_1,\quad [e_2, e_3] = \alpha e_2,\quad \text{where}\; 0 < |\alpha| < 1.$$
The automorphism group of the Lie algebra $\operatorname{A}_{3, 5}^{\alpha} \oplus \operatorname{A}_1$ is composed of elements in $\operatorname{GL}(4, \mathbb{R})$ of the form \cite{christodoulakis2003automorphisms}
$$\begin{bmatrix}
a_1 & 0 & a_3 & 0\\
0 & a_6 & a_7 & 0\\
0 & 0 & 1 & 0\\
0 & 0 & a_{15} & a_{16}
\end{bmatrix}.
$$
\begin{theorem}
Every metric on $\operatorname{A}_{3, 5}^{\alpha} \oplus \operatorname{A}_1$ is equivalent to one defined by an orthonormal basis
$$X_1 = e_1, \quad X_2 = b_{12}e_1 + e_2, \quad X_3 = b_{33}e_3, \quad X_4 = b_{14}e_1 + b_{24}e_2 + b_{34}e_3 + e_{4}
$$
where $b_{33} > 0, \quad b_{14}, b_{24}, b_{34} \geq 0, \quad b_{12} \in \mathbb{R}$.
\end{theorem}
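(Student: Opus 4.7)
The plan is to follow the two-step template established in the preceding subsections. First, I would reduce a general orthonormal basis $B'$ of the form \eqref{basis} by left-multiplying by an upper triangular automorphism, i.e.\ an element of $\operatorname{Aut}(\operatorname{A}_{3,5}^{\alpha}\oplus\operatorname{A}_1)$ with $a_{15}=0$. The choices $a_1 = 1/b'_{11}$, $a_6 = 1/b'_{22}$, $a_{16} = 1/b'_{44}$ normalize the diagonal to $(1,1,b_{33},1)$, while $a_3 = -a_1 b'_{13}/b'_{33}$ and $a_7 = -a_6 b'_{23}/b'_{33}$ kill the $(1,3)$ and $(2,3)$ entries. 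Every metric is therefore represented by
$$B = \begin{bmatrix} 1 & b_{12} & 0 & b_{14}\\ 0 & 1 & 0 & b_{24}\\ 0 & 0 & b_{33} & b_{34}\\ 0 & 0 & 0 & 1 \end{bmatrix}, \quad b_{33} > 0.$$

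Next, for two such reduced bases $B=(b_{ij})$ and $C=(c_{ij})$, I would compute $C^{-1}AB$ for the diagonal automorphism $A = \operatorname{diag}(a_1, a_6, 1, a_{16})$, proceeding analogously to the $\operatorname{2A}_2$ subsection. Since the product is again upper triangular, orthogonality forces the diagonal entries to be $\pm 1$, so $a_1, a_6, a_{16} \in \{\pm 1\}$ and $b_{33}=c_{33}$, while the vanishing of the four off-diagonal entries yields
$$a_1 b_{12} = a_6 c_{12}, \quad a_1 b_{14} = a_{16} c_{14}, \quad a_6 b_{24} = a_{16} c_{24}, \quad b_{34} = a_{16} c_{34}.$$

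The crux of the argument is that the three sign parameters $a_1, a_6, a_{16} \in \{\pm 1\}$ produce the four induced sign factors $\sigma_{12} = a_1 a_6$, $\sigma_{14} = a_1 a_{16}$, $\sigma_{24} = a_6 a_{16}$, $\sigma_{34} = a_{16}$, which obey the multiplicative dependency $\sigma_{12} = \sigma_{14}\sigma_{24}$. Hence only three of the four signs may be independently prescribed. I would pick $a_{16}, a_6, a_1$ in turn so that $b_{34}, b_{24}, b_{14}$ are sent to their absolute values, yielding the normalization $b_{14}, b_{24}, b_{34} \geq 0$; at that point $\sigma_{12}$ is determined, so $b_{12}$ must be allowed to range over all of $\mathbb{R}$. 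The main obstacle I anticipate is precisely the identification of this sign-dependence relation $\sigma_{12} = \sigma_{14}\sigma_{24}$, which is what explains the asymmetric appearance of $b_{12}$ in the final statement.
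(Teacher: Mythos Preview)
Your proposal is correct and follows essentially the same approach as the paper: reduce via an upper triangular automorphism to the form \eqref{form6}, then use a diagonal automorphism $A=\operatorname{diag}(a_1,a_6,1,a_{16})$ and read off the same four sign relations from orthogonality of $C^{-1}AB$. Your explicit identification of the dependency $\sigma_{12}=\sigma_{14}\sigma_{24}$ is a welcome clarification that the paper leaves implicit in its final sentence.
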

\begin{proof}
By multiplying $B'$ by an upper triangular automorphism of $\operatorname{A}_{3, 5}^{\alpha} \oplus \operatorname{A}_1$, every metric on $\mathfrak{g} = \operatorname{A}_{3, 5}^{\alpha} \oplus \operatorname{A}_1$ is equivalent to one determined by orthonormal basis
\begin{equation} \label{form6}
\begin{bmatrix}
1 & b_{12} & 0 & b_{14}\\
0 & 1 & 0 & b_{24}\\
0 & 0 & b_{33} & b_{34}\\
0 & 0 & 0 & 1\\
\end{bmatrix}, \quad b_{33} > 0. 
\end{equation}
Given two orthonormal bases $B = (b_{ij})$ and $C = (c_{ij})$ of the form (\ref{form6}), we choose a diagonal automorphism $A$ of $\mathfrak{g}$ and we calculate
$$C^{-1}AB = \begin{bmatrix}
a_{1} & x & 0 & y\\
0 & a_6 & 0 & z\\
0 & 0 & \frac{b_{33}}{c_{33}} & \frac{u}{c_{33}}\\
0 & 0 & 0 & a_{16}
\end{bmatrix}.$$
Where the elements $x, y, z$ and $u$ are given by
$$
\left\lbrace\begin{array}{lll}
x = a_1b_{12} - a_6c_{12} \\
y = a_1b_{14} - a_6c_{12}b_{24} + (c_{12}c_{24} - c_{14})a_{16} \\
z = a_6b_{24} - a_{16}c_{24}\\
u = b_{34} - a_{16}c_{34}
\end{array}\right.
$$
The matrix $C^{-1}AB$ is orthogonal precisely when $a_1, a_6$ and $a_{16}$ are equal to $\pm1$,  $b_{33} = c_{33}$ and $x = y = z = u = 0$. Thus
$$
\left\lbrace\begin{array}{lll}
a_1b_{12} = a_6c_{12} \\
a_1b_{14} = a_{16}c_{14}  \\
a_6b_{24} = a_{16}c_{24} \\
b_{34} = a_{16}c_{34}
\end{array}\right.
$$
By choice of $a_{16}$, $a_6$ and $a_{1}$ we can put $b_{34}, b_{24}, b_{14} \geq 0$, but we have $b_{12} \in \mathbb{R}$.
\end{proof}
\subsection{The Lie algebra $\operatorname{A}_{3, 7}^{\alpha} \oplus \operatorname{A}_1$}
The Lie algebra $\operatorname{A}_{3, 7}^{\alpha} \oplus \operatorname{A}_1$ has a basis $\mathcal{B} = \left\lbrace e_1, e_2, e_3, e_4\right\rbrace$ such that the nonzero brackets are 
$$[e_1, e_3] = \alpha e_1 - e_2,\quad [e_2, e_3] = e_1 + \alpha e_2,\quad \text{where}\; \alpha > 0.$$
The automorphism group of the Lie algebra $\operatorname{A}_{3, 7}^{\alpha} \oplus \operatorname{A}_1$ consists of elements of $\operatorname{GL}(4, \mathbb{R})$ of the form \cite{christodoulakis2003automorphisms}
$$\begin{bmatrix}
a_1 & a_2 & a_3 & 0\\
-a_2 & a_1 & a_7 & 0\\
0 & 0 & 1 & 0\\
0 & 0 & a_{15} & a_{16}
\end{bmatrix}.
$$
\begin{theorem} \label{reasoning}
Every metric on $\operatorname{A}_{3, 7}^{\alpha} \oplus \operatorname{A}_1$ is equivalent to one defined by an orthonormal basis
$$X_1 = e_1, \quad X_2 = b_{22}e_2, \quad X_3 = b_{33}e_3, \quad X_4 = b_{14}e_1 + b_{24}e_2 + b_{34}e_{3} + e_{4}$$
$\hspace{2.3cm}  b_{33} > 0, \quad 0 < b_{22} < 1, \quad b_{14}, b_{24}, b_{34} \geq 0.$\\
or 
$$X_1 = e_1, \quad X_2 = b_{22}e_2, \quad X_3 = b_{33}e_3, \quad X_4 = b_{14}e_1 + b_{24}e_2 + b_{34}e_{3} + e_{4}
$$
$\hspace{2.3cm}  b_{33} > 0, \quad  0 < b_{22} < 1, \quad b_{14} > 0, \quad b_{24} < 0, \quad b_{34} \geq 0$.\\
or
$$X_1 = e_1, \quad X_2 = e_2, \quad X_3 = b_{33}e_3, \quad X_4 = b_{14}e_1 + b_{34}e_{3} + e_{4}
$$
$\hspace{2.3cm}  b_{33} > 0, \quad b_{14}, b_{34} \geq 0$.
\end{theorem}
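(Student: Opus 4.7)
The plan is to follow the paper's standard three-step reduction. First, apply an upper triangular automorphism (with $a_2 = a_{15} = 0$), using the four free parameters $a_1, a_3, a_7, a_{16}$ to normalize $b_{11} = b_{44} = 1$ and kill $b_{13} = b_{23} = 0$, bringing any orthonormal basis to the intermediate form
$$B = \begin{bmatrix} 1 & b_{12} & 0 & b_{14} \\ 0 & b_{22} & 0 & b_{24} \\ 0 & 0 & b_{33} & b_{34} \\ 0 & 0 & 0 & 1 \end{bmatrix}, \quad b_{22}, b_{33} > 0.$$
The remaining residual automorphism freedom is carried by $a_2$ and $a_{15}$.

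Next, given two such standard forms $B, C$ and a candidate $A \in \operatorname{Aut}(\mathfrak{g})$, I observe that $A$, $B$, and $C$ all respect a $2+2$ block upper triangular structure, so $C^{-1} AB$ is block upper triangular and its orthogonality forces it to be block diagonal with each diagonal block orthogonal. Setting $R_A = \begin{bmatrix} a_1 & a_2 \\ -a_2 & a_1 \end{bmatrix}$, $L_A = \begin{bmatrix} 1 & 0 \\ a_{15} & a_{16} \end{bmatrix}$, $P_B = \begin{bmatrix} 1 & b_{12} \\ 0 & b_{22} \end{bmatrix}$, $R_B = \begin{bmatrix} b_{33} & b_{34} \\ 0 & 1 \end{bmatrix}$, and analogously $P_C, R_C$, the equivalence splits into: (A) $R_A P_B P_B^T R_A^T = P_C P_C^T$; (B) $L_A R_B R_B^T L_A^T = R_C R_C^T$; and (C) a cross-block relation yielding $R_A (b_{14}, b_{24})^T = a_{16}(c_{14}, c_{24})^T$ together with $a_3 = a_{15} c_{14}$, $a_7 = a_{15} c_{24}$. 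Writing $R_A = r R_\theta$ with $r = \sqrt{a_1^2 + a_2^2}$ and $R_\theta \in \operatorname{SO}(2)$, condition (A) is a rotation-dilation on $P_B P_B^T$; diagonalizing by $R_\theta$ and fixing $r$ so that $c_{11} = 1$, one lands in the new standard form with $b_{12} = 0$ and $b_{22} = \sqrt{\lambda_2/\lambda_1} \in (0, 1]$, where $\lambda_1 \geq \lambda_2 > 0$ are the eigenvalues of $P_B P_B^T$. Equality $b_{22} = 1$ occurs exactly when $P_B P_B^T$ is a scalar multiple of $I$; then $P_C = I$ is stabilized by all of $\operatorname{SO}(2)$, and by (C) this residual $\operatorname{SO}(2)$ acts on $(b_{14}, b_{24})$ by rotation, which one uses to force $b_{24} = 0$ and $b_{14} \geq 0$, producing the third form. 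Generically $0 < b_{22} < 1$; only $\{\pm I\} \subset \mathbb{C}^\times$ stabilizes, inducing via (C) the sign flip $(b_{14}, b_{24}) \mapsto (-b_{14}, -b_{24})$, while the discrete automorphism $A = \operatorname{diag}(1, 1, 1, -1)$ (i.e., $a_{16} = -1$) acts as $(b_{14}, b_{24}, b_{34}) \mapsto (-b_{14}, -b_{24}, -b_{34})$. Composing these two commuting $\mathbb{Z}/2$ generators gives a flip of $b_{34}$ alone, so one normalizes $b_{34} \geq 0$ and, by the remaining $\mathbb{Z}/2$, obtains either the first form ($b_{14}, b_{24} \geq 0$) or the second form ($b_{14} > 0, b_{24} < 0$).

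The main obstacle is checking that condition (B), combined with the auxiliary standard-form constraints $c_{13} = c_{23} = 0$, does not yield further continuous identifications in the generic case. Condition (B) alone admits a one-parameter conic family of $(a_{15}, a_{16})$, parametrizable as $a_{15} = \sin\phi/b_{33}$, $a_{16} = \cos\phi - b_{34}\sin\phi/b_{33}$, under which $(b_{33}, b_{34})$ transforms by a rotation in the $(b_{33}, b_{34})$-plane. However, a direct Cholesky computation of $(AB)(AB)^T$ shows $c_{13}$ and $c_{23}$ to be proportional (up to sign) to $b_{14}\sin\phi$ and $b_{24}\sin\phi$, so imposing $c_{13} = c_{23} = 0$ forces $\sin\phi = 0$ whenever $(b_{14}, b_{24}) \neq (0,0)$, collapsing the family to the discrete $(a_{15}, a_{16}) = (0, \pm 1)$ used above. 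Documenting this interplay carefully, together with the boundary strata, is the main delicate bookkeeping step.
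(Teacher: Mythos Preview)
Your approach is correct and reaches the stated theorem, but it differs structurally from the paper's proof. The paper invokes Lemma~\ref{lemma2.5} at the outset to immediately normalize the upper $2\times 2$ block (forcing $b_{12}=0$ and $0<b_{22}\le 1$), and thereafter only tests equivalence against automorphisms of the restricted shape $A=\begin{bmatrix} a_1 & a_2 & 0 & 0\\ -a_2 & a_1 & 0 & 0\\ 0 & 0 & 1 & 0\\ 0 & 0 & 0 & a_{16}\end{bmatrix}$, never discussing why one may set $a_3=a_7=a_{15}=0$. You instead keep the larger intermediate form with $b_{12}$ present, exploit the $2{+}2$ block upper-triangular structure to split the orthogonality condition into your (A), (B), (C), and then recover the Lemma~\ref{lemma2.5} reduction as a consequence of (A). Your treatment of condition (B) --- tracking the one-parameter family in $(a_{15},a_{16})$ and showing it collapses to $a_{15}=0$ when $(b_{14},b_{24})\neq(0,0)$ --- is precisely the justification the paper omits; this makes your argument more complete on the question of whether the listed representatives are actually distinct. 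The trade-off is length: the paper's proof is a short computation, yours is a more careful orbit analysis.

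One caveat: your own parenthetical about the ``boundary strata'' is where the uniqueness argument genuinely breaks down. When $(b_{14},b_{24})=(0,0)$ and $0<b_{22}<1$, your $\sin\phi$ constraint vanishes and the one-parameter family in (B) survives, so $(b_{33},b_{34})$ can be rotated freely (e.g.\ $(b_{33},b_{34})=(1,1)$ and $(\sqrt{2},0)$ become equivalent via $a_{15}=1/\sqrt{2},\ a_{16}=-\sqrt{2}$). Since the theorem as stated is only an existence claim (``every metric is equivalent to one of the forms''), this does not invalidate either proof; but if you intend to also establish that the parameters are moduli, that stratum needs separate treatment.
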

\begin{proof}
By multiplying $B'$ by an upper triangular automorphism of $\mathfrak{g}$ and by using the lemma \ref{lemma2.5}, $B'$ is equivalent to the following
\begin{equation} \label{form7}
B = \begin{bmatrix}
1 & 0 & 0 & b_{14}\\
0 & b_{22} & 0 & b_{24}\\
0 & 0 & b_{33} & b_{34}\\
0 & 0 & 0 & 1\\
\end{bmatrix}, \quad 0 < b_{22} \leq 1, \quad b_{33} > 0, \quad b_{14}, b_{24},  b_{34} \in \mathbb{R}. 
\end{equation}
Given two orthonormal bases $B = (b_{ij})$ and $C = (c_{ij})$ of the form (\ref{form7}), let $A$ be an automorphism of $\mathfrak{g}$ of the form $A = \begin{bmatrix}
a_1 & a_2 & 0 & 0\\
-a_2 & a_1 & 0 & 0\\
0 & 0 & 1 & 0\\
0 & 0 & 0 & a_{16}
\end{bmatrix}$. We calculate
$$C^{-1}AB = \begin{bmatrix}
a_{1} & a_2b_{22} & 0 & a_1b_{14} + a_2b_{24} - a_{16}c_{14}\\
\\
\frac{-a_2}{c_{22}} & \frac{a_1b_{22}}{c_{22}} & 0 & \frac{-a_2b_{14} + a_1b_{24} - a_{16}c_{24}}{c_{22}}\\
\\
0 & 0 & \frac{b_{33}}{c_{33}} & \frac{b_{34} - a_{16}c_{34}}{c_{33}}\\
\\
0 & 0 & 0 & a_{16}
\end{bmatrix}.$$
This matrix is orthogonal precisely when the upper $2 \times 2$ block is orthogonal, $a_{16} = \pm1$, $b_{33} = c_{33}$, $b_{34} = a_{16}c_{34}$ and
$\begin{bmatrix}
a_1 & a_2\\
-a_2 & a_1
\end{bmatrix}\begin{bmatrix}
b_{14}\\
b_{24}
\end{bmatrix} = \pm\begin{bmatrix}
c_{14}\\
c_{24}
\end{bmatrix}.$ By choice of $a_{16}$ we take $b_{34} \geq 0$.\\
If $b_{22} \neq c_{22}$, then the upper $2 \times 2$ block cannot be orthogonal because we have\\ $0 < b_{22}, c_{22} \leq 1$. Thus, we assume that $b_{22} = c_{22}$.\\
First suppose that $b_{22} = c_{22} = 1$, then the equality 
$\begin{bmatrix}
a_1 & a_2\\
-a_2 & a_1
\end{bmatrix}\begin{bmatrix}
b_{14}\\
b_{24}
\end{bmatrix} = \pm\begin{bmatrix}
c_{14}\\
c_{24}
\end{bmatrix}$ means that $(b_{14}, b_{24})$ and $(c_{14}, c_{24})$ are in the same orbit of the natural action of $\operatorname{O}(2, \mathbb{R})$ on $\mathbb{R}^2$. From this we infer that when $b_{22} = 1$, we can put $b_{24} = 0$ and $b_{14} \geq 0$. This is the third metric in our theorem.\\
If $0 < b_{22} < 1$, then the condition that the upper $2 \times 2$ block of $C^{-1}AB$ is orthogonal implies that $a_2 = 0$. Thus
$ \begin{bmatrix}
b_{14}\\
b_{24}
\end{bmatrix} = \pm \begin{bmatrix}
c_{14}\\
c_{24}
\end{bmatrix}.$\\
If $c_{14}$ and $c_{24}$ have the same sign, we can take $b_{14}, b_{24} \geq 0$ (first metric in our theorem). If  $c_{14}$ and $c_{24}$ have different sign, we can take $b_{14} > 0$, $b_{24} < 0$ (second metric in our theorem).
\end{proof}
\section{Indecomposable nonunimodular 4-dimensional Lie algebras}
In this section, we classify the left invariant Riemannian metrics on all the indecomposable Lie algebras $\mathfrak{g}$ from table 1.
\subsection{The Lie algebra $\operatorname{A}_{4, 2}^{\alpha}$}
The Lie algebra $\operatorname{A}_{4, 2}^{\alpha}$ has a basis $\mathcal{B} = \left\lbrace e_1, e_2, e_3, e_4\right\rbrace$ such that the nonzero brackets are $$[e_1, e_4] = \alpha e_1,\quad [e_2, e_4] = e_2, \quad [e_3, e_4] = e_2 + e_3.$$
We distinguish between two cases associated with this Lie algebra. In the first case, we set $\alpha \neq (0, 1)$, and in the second case, we set $\alpha = 1$.\\
The automorphisms of these two Lie algebras were corrected by the authors of the following paper \cite{christodoulakis2003automorphisms}. The correct results regarding the automorphisms of these two Lie algebras are described in \cite{christodoulakis2003corrigendum}.
\subsubsection{The Lie algebra $\operatorname{A}_{4, 2}^{\alpha}$ where $\alpha \neq (0, 1)$}
The automorphism group of the Lie algebra $\operatorname{A}_{4, 2}^{\alpha}$ where $\alpha \neq (0, 1)$ is formed by elements in $\operatorname{GL}(4, \mathbb{R})$ with the form \cite{christodoulakis2003corrigendum}
$$\begin{bmatrix}
a_1 & 0 & 0 & a_4\\
0 & a_6 & a_7 & a_8\\
0 & 0 & a_6 & a_{12}\\
0 & 0 & 0 & 1
\end{bmatrix}.
$$
\begin{theorem} \label{samestructure}
Every metric on $\operatorname{A}_{4, 2}^{\alpha}$ where $\alpha \neq (0, 1)$ is equivalent to one defined by an orthonormal basis
$$X_1 = e_1, \quad X_2 = b_{12}e_1 + b_{22}e_2, \quad X_3 = b_{13}e_1 + e_3, \quad X_4 = b_{44}e_{4}
$$
where $b_{22}, b_{44} > 0$, \quad $b_{12} \geq 0, \quad b_{13} \in \mathbb{R}$.
\end{theorem}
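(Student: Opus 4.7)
The plan is to follow the same four-step scheme used in the previous subsections. Start with a general orthonormal basis $B'$ in the form \eqref{basis}. Notice that the automorphism group here contains matrices of the form
$$\begin{bmatrix} 1 & 0 & 0 & x_1 \\ 0 & 1 & 0 & x_2 \\ 0 & 0 & 1 & x_3 \\ 0 & 0 & 0 & 1 \end{bmatrix}$$
for arbitrary $(x_1,x_2,x_3) \in \mathbb{R}^3$ (taking $a_1=a_6=1$, $a_7=0$, and solving for $a_4,a_8,a_{12}$). Hence Lemma \ref{lemma2.2} applies and I may assume $B'$ is block-diagonal with $b_{14}=b_{24}=b_{34}=0$. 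Next, I apply the diagonal automorphism $\operatorname{diag}(a_1, a_6, a_6, 1)$ with $a_1 = 1/b_{11}$ and $a_6 = 1/b_{33}$ to normalize the $(1,1)$ and $(3,3)$ diagonal entries to $1$ (the $(2,2)$ entry becomes $b_{22}/b_{33}$, which I relabel $b_{22}$), and then the unipotent automorphism with parameter $a_7 = -b_{23}$ in the $(2,3)$ slot to kill $b_{23}$. This reduces $B'$ to
$$B = \begin{bmatrix} 1 & b_{12} & b_{13} & 0 \\ 0 & b_{22} & 0 & 0 \\ 0 & 0 & 1 & 0 \\ 0 & 0 & 0 & b_{44} \end{bmatrix}, \qquad b_{22}, b_{44} > 0.$$

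To decide equivalence between two such $B = (b_{ij})$ and $C = (c_{ij})$, Lemma \ref{lemma2.3} forces the allowable automorphism $A$ to be block-diagonal ($a_4 = a_8 = a_{12} = 0$), leaving
$$A = \begin{bmatrix} a_1 & 0 & 0 & 0 \\ 0 & a_6 & a_7 & 0 \\ 0 & 0 & a_6 & 0 \\ 0 & 0 & 0 & 1 \end{bmatrix}.$$
I will then compute $C^{-1}AB$ directly; because both $B$ and $C$ are block-diagonal of this specific shape, the product is upper triangular, and orthogonality forces it to be diagonal with $\pm 1$ entries.

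Reading off the diagonal entries $a_1,\; a_6 b_{22}/c_{22},\; a_6,\; b_{44}/c_{44}$ gives $a_1 = \pm 1$, $a_6 = \pm 1$, $b_{22} = c_{22}$, and $b_{44} = c_{44}$; vanishing of the $(2,3)$ entry gives $a_7 = 0$; and vanishing of the $(1,2)$ and $(1,3)$ entries yields the relations
$$a_1 b_{12} = a_6 c_{12}, \qquad a_1 b_{13} = a_6 c_{13}.$$
Thus $b_{22}$ and $b_{44}$ are genuine invariants, while the sign choices $a_1,a_6 = \pm 1$ can be used to normalize exactly one of $b_{12}$ or $b_{13}$: I choose to put $b_{12} \geq 0$, after which the relation on $b_{13}$ is determined (the signs of $a_1$ and $a_6$ are coupled whenever $b_{12} \neq 0$), so $b_{13}$ remains a free real parameter.

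I do not expect any serious obstacle; the proof is structurally identical to the decomposable cases already treated, the only subtlety being to notice early that the automorphism group is rich enough in its last column to trigger Lemma \ref{lemma2.2}, and then that the diagonal block $(a_6, a_6)$ on the $(2,2),(3,3)$ entries correctly explains why $b_{33}$ does not survive as an independent invariant but $b_{22}$ does.
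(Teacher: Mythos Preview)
Your proof is correct and follows essentially the same approach as the paper: reduce $B'$ via automorphisms to the block form \eqref{form8}, invoke Lemma~\ref{lemma2.3} to restrict $A$, compute $C^{-1}AB$, and read off $a_7=0$, $a_1,a_6=\pm1$, $b_{22}=c_{22}$, $b_{44}=c_{44}$, $a_1b_{12}=a_6c_{12}$, $a_1b_{13}=a_6c_{13}$. Your presentation is in fact slightly more explicit than the paper's (you spell out the use of Lemma~\ref{lemma2.2} and the two normalization steps for $b_{11},b_{33},b_{23}$), but the argument is the same.
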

\begin{proof}
By multiplying $B'$ by an automorphism of $\mathfrak{g}$, our orthonormal basis $B'$ is equivalent to
\begin{equation} \label{form8}
B = \begin{bmatrix}
1 & b_{12} & b_{13} & 0\\
0 & b_{22} & 0 & 0\\
0 & 0 & 1 & 0\\
0 & 0 & 0 & b_{44}\\
\end{bmatrix}, \quad b_{22}, b_{44} > 0, \quad b_{12}, b_{13} \in \mathbb{R}. 
\end{equation}
Given $A \in \operatorname{Aut}(\mathfrak{g})$ satisfying lemma \ref{lemma2.3}, and two orthonormal bases $B = (b_{ij})$ and $C = (c_{ij})$ of the form (\ref{form8}), we calculate
$$C^{-1}AB = \begin{bmatrix}
a_{1} & a_1b_{12} - a_6c_{12}\frac{b_{22}}{c_{22}} & a_{1}b_{13} - \frac{a_7c_{12}}{c_{22}} - a_6c_{13} & 0\\
\\
0 & \frac{a_6b_{22}}{c_{22}} & \frac{a_7}{c_{22}} & 0\\
\\
0 & 0 & a_6 & 0\\
\\
0 & 0 & 0 & \frac{b_{44}}{c_{44}}
\end{bmatrix}.$$
This matrix is orthogonal precesily when $b_{44} = c_{44}$, $a_7 = 0$, $a_1, a_6$ are equal to $\pm1$, $b_{22} = c_{22}$, $a_1b_{12} = a_6c_{12}$ and $a_1b_{13} = a_6c_{13}$. By choice of $a_1$ and $a_6$ we can assume that $b_{12} \geq 0$, but we have $b_{13} \in \mathbb{R}$.
\end{proof}
\subsubsection{The Lie algebra $\operatorname{A}_{4, 2}^{1}$}
The automorphism group of the Lie algebra $\operatorname{A}_{4, 2}^{1}$ is formed by elements in $\operatorname{GL}(4, \mathbb{R})$ with the form \cite{christodoulakis2003corrigendum}
$$\begin{bmatrix}
a_1 & 0 & a_3 & a_4\\
a_5 & a_6 & a_7 & a_8\\
0 & 0 & a_6 & a_{12}\\
0 & 0 & 0 & 1
\end{bmatrix}.
$$
\begin{theorem}
Every metric on $\operatorname{A}_{4, 2}^{1}$ is equivalent to one defined by an orthonormal basis
$$X_1 = e_1, \quad X_2 = b_{12}e_1 + b_{22}e_2, \quad X_3 = e_3, \quad X_4 = b_{44}e_{4}
$$
where $b_{22}, b_{44} > 0, \quad b_{12} \geq 0$.
\end{theorem}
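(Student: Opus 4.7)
My plan is to adapt the argument of Theorem \ref{samestructure}, taking advantage of the extra freedom present in $\operatorname{Aut}(\operatorname{A}_{4,2}^{1})$ relative to the $\alpha \neq 0, 1$ case. Starting from an upper triangular orthonormal basis $B'$, I note that the last column $(a_4, a_8, a_{12}, 1)^T$ of a general automorphism is free, so Lemma \ref{lemma2.2} lets me replace $B'$ by a block-diagonal basis ($b_{14} = b_{24} = b_{34} = 0$); Lemma \ref{lemma2.3} then tells me that any comparison automorphism used afterwards will have $a_4 = a_8 = a_{12} = 0$.

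The genuinely new ingredient in the $\alpha = 1$ setting is that the $(1,3)$-entry $a_3$ of an automorphism is now free. Using an upper-triangular automorphism (taking $a_5 = 0$ to keep $AB'$ upper triangular), I would choose $a_1 = 1/b_{11}$, $a_6 = 1/b_{33}$, $a_7 = -b_{23}/b_{33}^{2}$, and $a_3 = -b_{13}/(b_{11} b_{33})$ so as to simultaneously normalize $b_{11} = b_{33} = 1$ and kill $b_{13}$ and $b_{23}$. Killing $b_{13}$ via $a_3$ is the genuinely new step; it explains the drop from dimension $4$ to $3$ compared with Theorem \ref{samestructure}, one additional automorphism parameter eliminating one coordinate.

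After this reduction the basis is already in the claimed form, with $b_{22}, b_{44} > 0$ but $b_{12} \in \mathbb{R}$ a priori. To enforce $b_{12} \geq 0$, I would compose the automorphism $\operatorname{diag}(-1, 1, 1, 1)$ (which lies in $\operatorname{Aut}(\operatorname{A}_{4,2}^{1})$ since $a_1$ and $a_6$ may independently be any nonzero real numbers) with the orthogonal reflection $\operatorname{diag}(-1, 1, 1, 1) \in \operatorname{O}(4, \mathbb{R})$; this composite flips the sign of $b_{12}$ while preserving the positive-diagonal upper-triangular normal form.

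I expect the main obstacle to be organizational rather than computational: I have to be careful that the successive reductions are done in a compatible order, in particular that the initial block-diagonalization from Step 1 is preserved by the upper-triangular normalization of Step 2, and that the final sign flip does not disturb the entries already fixed. The other new free parameter $a_5 \in \operatorname{Aut}(\operatorname{A}_{4,2}^{1})$ is simply set to zero throughout, since it is not required to reach the specific reduced form stated in the theorem.
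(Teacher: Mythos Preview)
Your proposal is correct and follows essentially the same route as the paper: reduce to the form $\operatorname{diag}(1,b_{22},1,b_{44})$ with an off-diagonal $b_{12}$ by an upper-triangular automorphism (the paper does this in one line, you split it into Lemma~\ref{lemma2.2} plus an explicit choice of $a_1,a_3,a_6,a_7$), and then use the sign freedom in $a_1,a_6$ to force $b_{12}\ge 0$. The only substantive difference is that the paper carries out the equivalence check $C^{-1}AB\in\operatorname{O}(4,\mathbb{R})$ with diagonal $A$, which simultaneously yields the sign flip and verifies that no further reduction of $(b_{12},b_{22},b_{44})$ is possible; your argument establishes the existence statement of the theorem but omits this uniqueness verification, which is not strictly required by the theorem as worded but is what justifies the entry $\dim(\mathfrak{M})=3$ in Table~2.
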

\begin{proof}
By multiplying $B'$ by an upper triangular automorphism of $\mathfrak{g}$, our orthonormal basis $B'$ is equivalent to
\begin{equation} \label{form8'}
B = \begin{bmatrix}
1 & b_{12} & 0 & 0\\
0 & b_{22} & 0 & 0\\
0 & 0 & 1 & 0\\
0 & 0 & 0 & b_{44}\\
\end{bmatrix}, \quad b_{22}, b_{44} > 0. 
\end{equation}
Given two orthonormal bases $B = (b_{ij})$ and $C = (c_{ij})$ of the form (\ref{form8'}), we choose a diagonal automorphism $A$ of $\mathfrak{g}$ and we calculate
$$C^{-1}AB = \begin{bmatrix}
a_{1} & a_1b_{12} - a_6c_{12}\frac{b_{22}}{c_{22}} & 0 & 0\\
\\
0 & \frac{a_6b_{22}}{c_{22}} & 0 & 0\\
\\
0 & 0 & a_6 & 0\\
\\
0 & 0 & 0 & \frac{b_{44}}{c_{44}}
\end{bmatrix}.$$
This matrix is orthogonal precesily when $a_1, a_6$ are equal to $\pm1$, $b_{44} = c_{44}$, $b_{22} = c_{22}$ and $a_1b_{12} = a_6c_{12}$. By choice of $a_1$ and $a_6$ we can put $b_{12} \geq 0$.
\end{proof}
\subsection{The Lie algebra $\operatorname{A}_{4, 3}$}
The Lie algebra $\operatorname{A}_{4, 3}$ has a basis $\mathcal{B} = \left\lbrace e_1, e_2, e_3, e_4\right\rbrace$ such that the nonzero brackets are 
$$[e_1, e_4] = e_1,\quad [e_3, e_4] = e_2.$$
The group of automorphisms of the Lie algebra $\operatorname{A}_{4, 3}$ is given by \cite{christodoulakis2003automorphisms}
$$ \operatorname{Aut}\left( \operatorname{A}_{4, 3}\right) = \left\lbrace \begin{bmatrix}
a_1 & 0 & 0 & a_4\\
0 & a_6 & a_7 & a_8\\
0 & 0 & a_6 & a_{12}\\
0 & 0 & 0 & 1
\end{bmatrix} \right\rbrace  \subset \operatorname{GL}(4, \mathbb{R}).
$$
\begin{theorem}
Every metric on $\operatorname{A}_{4, 3}$ is equivalent to one defined by an orthonormal basis
$$X_1 = e_1, \quad X_2 = b_{12}e_1 + b_{22}e_2, \quad X_3 = b_{13}e_1 + e_3, \quad X_4 = b_{44}e_{4}
$$
where $b_{22}, b_{44} > 0, \quad b_{12} \geq 0, \quad b_{13} \in \mathbb{R}$.
\end{theorem}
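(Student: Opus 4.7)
The plan is to exploit the fact that the automorphism group $\operatorname{Aut}(\operatorname{A}_{4,3})$ has exactly the same matrix form as $\operatorname{Aut}(\operatorname{A}_{4,2}^{\alpha})$ for $\alpha \neq (0,1)$, so the argument of Theorem \ref{samestructure} transfers step by step. Since the general automorphism has last row $(0,0,0,1)$ and arbitrary entries $a_4, a_8, a_{12}$ in its last column above the diagonal, Lemma \ref{lemma2.2} applies: after left multiplication by a suitable automorphism I may assume my orthonormal basis $B'$ is block diagonal, with scalar $b_{44} > 0$ in the bottom right corner and an upper $3 \times 3$ block.

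Next I would clean up the upper $3 \times 3$ block using upper triangular automorphisms of the form
$$\begin{bmatrix} a_1 & 0 & 0 & 0\\ 0 & a_6 & a_7 & 0\\ 0 & 0 & a_6 & 0\\ 0 & 0 & 0 & 1 \end{bmatrix}:$$
take $a_1 = 1/b_{11}$ to force the $(1,1)$-entry to $1$, take $a_6 = 1/b_{33}$ to force the $(3,3)$-entry to $1$, and then choose $a_7$ to kill the $(2,3)$-entry. This brings $B'$ into the canonical form of equation (\ref{form8}) from Theorem \ref{samestructure}, namely an upper triangular matrix with diagonal $(1,b_{22},1,b_{44})$, zero entries in positions $(2,3)$, $(1,4)$, $(2,4)$, $(3,4)$, and free entries $b_{12}, b_{13}$.

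For the equivalence step, I would invoke Lemma \ref{lemma2.3} to restrict to block diagonal automorphisms $A$ (so $a_4 = a_8 = a_{12} = 0$) and compute $C^{-1}AB$ for two bases $B, C$ of this canonical form. This reproduces the matrix computed in the proof of Theorem \ref{samestructure}, augmented only by the bottom-right entry $b_{44}/c_{44}$. Orthogonality then forces $b_{44} = c_{44}$, $a_7 = 0$, $a_1, a_6 \in \{\pm 1\}$, $b_{22} = c_{22}$, together with the relations $a_1 b_{12} = a_6 c_{12}$ and $a_1 b_{13} = a_6 c_{13}$.

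Finally, by choosing the signs $a_1, a_6 \in \{\pm 1\}$ I can arrange $b_{12} \geq 0$; but once the ratio $a_6/a_1$ is pinned down by that requirement, the sign of $b_{13}$ is determined by that of $c_{13}$, so $b_{13}$ cannot in general be normalized and must remain in $\mathbb{R}$. Since every step depends only on the matrix shape of $\operatorname{Aut}(\mathfrak{g})$ and not on the underlying Lie brackets, I expect no genuine obstacle: the computation is identical to the one already carried out for $\operatorname{A}_{4,2}^{\alpha}$, and the only task is to verify that the block-diagonal reduction and the orthogonality analysis go through verbatim.
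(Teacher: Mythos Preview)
Your proposal is correct and mirrors the paper's approach exactly: the paper's own proof consists of the single observation that $\operatorname{Aut}(\operatorname{A}_{4,3})$ has the same matrix form as $\operatorname{Aut}(\operatorname{A}_{4,2}^{\alpha})$ for $\alpha\neq(0,1)$, so the argument of Theorem~\ref{samestructure} carries over verbatim. You have simply written out the details that the paper leaves implicit.
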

\begin{proof}
The proof of this theorem is similar to the one of theorem \ref{samestructure}, because the automorphism group of the Lie algebras $\operatorname{A}_{4, 2}^{\alpha}$ and $\operatorname{A}_{4, 3}$ have the same structure.
\end{proof}
\subsection{The Lie algebra $\operatorname{A}_{4, 4}$}
The Lie algebra $\operatorname{A}_{4, 4}$ has a basis $\mathcal{B} = \left\lbrace e_1, e_2, e_3, e_4\right\rbrace$ such that the nonzero brackets are 
$$[e_1, e_4] = e_1, \quad [e_2, e_4] = e_1 + e_2, \quad [e_3, e_4] = e_2 + e_3.$$
The automorphism group of the Lie algebra $\operatorname{A}_{4, 4}$ is formed by  elements in $\operatorname{GL}(4, \mathbb{R})$ of the following form \cite{christodoulakis2003automorphisms}
$$\begin{bmatrix}
a_1 & a_2 & a_3 & a_4\\
0 & a_1 & a_2 & a_8\\
0 & 0 & a_1 & a_{12}\\
0 & 0 & 0 & 1
\end{bmatrix}.
$$
\begin{theorem}
Every metric on $\operatorname{A}_{4, 4}$ is equivalent to one defined by an orthonormal basis
$$X_1 = e_1, \quad X_2 = b_{12}e_1 + b_{22}e_2, \quad X_3 = b_{33}e_3, \quad X_4 = b_{44}e_{4}
$$
where $b_{22}, b_{33}, b_{44} > 0, \quad b_{12} \in \mathbb{R}$.
\end{theorem}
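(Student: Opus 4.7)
The plan is to follow the four-step template used in the preceding cases, adapted to the specific structure of $\operatorname{Aut}(\operatorname{A}_{4,4})$. First, I would invoke Lemma \ref{lemma2.2}. Since the last column $(a_4, a_8, a_{12}, 1)^T$ of any automorphism is completely unconstrained except for the fixed $(4,4)$ entry, the hypothesis of the lemma applies and every metric admits an orthonormal basis of the block-diagonal form
\begin{equation*}
B = \begin{bmatrix}
b_{11} & b_{12} & b_{13} & 0\\
0 & b_{22} & b_{23} & 0\\
0 & 0 & b_{33} & 0\\
0 & 0 & 0 & b_{44}
\end{bmatrix}, \quad b_{11}, b_{22}, b_{33}, b_{44} > 0.
\end{equation*}
I would then act on $B$ by a block-diagonal automorphism (with $a_4 = a_8 = a_{12} = 0$), using $a_3$ to zero the $(1,3)$ entry of $AB$, then $a_2$ to zero the $(2,3)$ entry, and finally $a_1 = 1/b_{11}$ to normalize the $(1,1)$ entry to $1$. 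The result is the proposed form with $b_{22}, b_{33}, b_{44}$ still positive and a rescaled $b_{12}$ remaining as the only off-diagonal freedom.

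For uniqueness, I would compare two reduced bases $B$ and $C$ by computing $C^{-1}AB$. Lemma \ref{lemma2.3} restricts $A$ to the block-diagonal subgroup, and the resulting product remains upper triangular. Orthogonality then forces it to be diagonal with $\pm 1$ entries: the vanishing of the $(2,3)$ and $(1,3)$ entries immediately kills $a_2$ and $a_3$; the $(2,2)$ and $(3,3)$ conditions give $a_1 = \pm 1$ together with $b_{22} = c_{22}$ and $b_{33} = c_{33}$ (since the diagonal entries are positive); the $(4,4)$ condition gives $b_{44} = c_{44}$; and finally the $(1,2)$ condition reduces to $a_1(b_{12} - c_{12}) = 0$, forcing $b_{12} = c_{12}$.

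The feature that distinguishes this case, and the reason the final parameter $b_{12}$ lies in $\mathbb{R}$ rather than $[0, \infty)$ as in several earlier theorems, is that the upper $3 \times 3$ block of $\operatorname{Aut}(\mathfrak{g})$ carries only the single diagonal scalar $a_1$, repeated three times. The choice $a_1 = -1$ negates the first three rows of $B$ uniformly, and this is absorbed by the orthogonal matrix $\operatorname{diag}(-1, -1, -1, 1)$ on the right without flipping the sign of $b_{12}$. No discrete symmetry remains to reduce $b_{12}$ to a nonnegative value; recognizing and justifying this rigidity is the only subtle point in an otherwise template-driven argument.
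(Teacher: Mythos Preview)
Your proposal is correct and follows essentially the same route as the paper: reduce via Lemma~\ref{lemma2.2} and an upper-triangular automorphism to the form~(\ref{form10}), then check uniqueness by computing $C^{-1}AB$ with $A$ block-diagonal as in Lemma~\ref{lemma2.3}, obtaining $a_2=a_3=0$, $a_1=\pm1$, $b_{ii}=c_{ii}$, and $b_{12}=c_{12}$. Your closing remark explaining why the single repeated scalar $a_1$ cannot flip the sign of $b_{12}$ is a helpful addition that the paper leaves implicit.
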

\begin{proof}
By multiplication by an automorphism of $\mathfrak{g}$, the orthonormal basis $B'$ is equivalent to the following
\begin{equation} \label{form10}
B = \begin{bmatrix}
1 & b_{12} & 0 & 0\\
0 & b_{22} & 0 & 0\\
0 & 0 & b_{33} & 0\\
0 & 0 & 0 & b_{44}\\
\end{bmatrix}, \quad b_{22}, b_{33}, b_{44} > 0, \quad b_{12} \in \mathbb{R}. 
\end{equation}
Given $A \in \operatorname{Aut}(\mathfrak{g})$ satisfying lemma \ref{lemma2.3}, and two orthonormal bases $B = (b_{ij})$ and $C = (c_{ij})$ of the form (\ref{form10}), we calculate
$$C^{-1}AB = \begin{bmatrix}
a_{1} & a_1b_{12} + (a_2 - \frac{a_1c_{12}}{c_{22}})b_{22} & (a_3 - \frac{a_2c_{12}}{c_{22}})b_{33} & 0\\
\\
0 & \frac{a_1b_{22}}{c_{22}} & \frac{a_2b_{33}}{c_{22}} & 0\\
\\
0 & 0 & \frac{a_1b_{33}}{c_{33}} & 0\\
\\
0 & 0 & 0 & \frac{b_{44}}{c_{44}}
\end{bmatrix}.$$
This matrix is orthogonal precesily when $a_2 = a_3 = 0$, $a_1 = \pm1$, $b_{22} = c_{22}$, $b_{33} = c_{33}$, $b_{44} = c_{44}$ and $b_{12} = c_{12}$.
\end{proof}
\subsection{The Lie algebra $\operatorname{A}_{4, 5}^{\alpha, \beta}$}
The Lie algebra $\operatorname{A}_{4, 5}^{\alpha, \beta}$ has a basis $\mathcal{B} = \left\lbrace e_1, e_2, e_3, e_4\right\rbrace$ such that the nonzero brackets are 
$$[e_1, e_4] = e_1, \quad [e_2, e_4] = \alpha e_2, \quad [e_3, e_4] = \beta e_3.$$
The automorphism group of the Lie algebra $\operatorname{A}_{4, 5}^{\alpha, \beta}$ consists of elements of $\operatorname{GL}(4, \mathbb{R})$ of the form \cite{christodoulakis2003automorphisms}
$$\begin{bmatrix}
a_1 & 0 & 0 & a_4\\
0 & a_6 & 0 & a_8\\
0 & 0 & a_{11} & a_{12}\\
0 & 0 & 0 & 1
\end{bmatrix}.
$$
\begin{theorem}
Every metric on $\operatorname{A}_{4, 5}^{\alpha, \beta}$ is equivalent to one defined by an orthonormal basis
$$X_1 = e_1, \quad X_2 = b_{12}e_1 + e_2, \quad X_3 = b_{13}e_1 + b_{23}e_2 + e_3, \quad X_4 = b_{44}e_{4}
$$
where  $b_{44} > 0$, \quad $b_{12}, b_{13} \geq 0$, \quad $b_{23} \in \mathbb{R}$.
\end{theorem}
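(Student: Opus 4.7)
The plan is to follow the same template used in the preceding subsections of Section 4, adapted to the diagonal shape of $\operatorname{Aut}(\operatorname{A}_{4,5}^{\alpha,\beta})$. First I would observe that the free parameters $a_4, a_8, a_{12}$ in the last column of an automorphism, together with diagonal entries $a_1, a_6, a_{11}$, allow me to apply Lemma \ref{lemma2.2}: any orthonormal basis $B'$ of the form (\ref{basis}) is equivalent to one whose last column is $(0,0,0,b_{44})^T$, and whose first three diagonal entries can be normalized to $1$ by a diagonal automorphism with positive entries (leaving $b_{44}>0$). This reduces $B'$ to the normal form
\begin{equation*}
B = \begin{bmatrix}
1 & b_{12} & b_{13} & 0\\
0 & 1 & b_{23} & 0\\
0 & 0 & 1 & 0\\
0 & 0 & 0 & b_{44}\\
\end{bmatrix}, \quad b_{44}>0, \quad b_{12}, b_{13}, b_{23}\in\mathbb{R}.
\end{equation*}

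Next, given two such bases $B=(b_{ij})$ and $C=(c_{ij})$, Lemma \ref{lemma2.3} forces any equivalence-producing automorphism $A$ to be block diagonal with upper $3\times 3$ block, so we may take $a_4=a_8=a_{12}=0$. Hence $A$ is diagonal with entries $a_1, a_6, a_{11}, 1$. The plan is to compute $C^{-1}AB$ explicitly and read off its entries: the upper $3\times 3$ block is upper triangular with diagonal $(a_1, a_6, a_{11})$, and the $(4,4)$ entry is $b_{44}/c_{44}$. Orthogonality of an upper triangular matrix with positive diagonal would already give the identity; since the diagonal entries here are real signs, orthogonality forces $a_1, a_6, a_{11}\in\{\pm 1\}$ and $b_{44}=c_{44}$, together with the vanishing of the three off-diagonal entries in positions $(1,2), (1,3), (2,3)$.

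Expanding those off-diagonal entries I expect the system
\begin{equation*}
a_1 b_{12}=a_6 c_{12}, \qquad a_6 b_{23}=a_{11}c_{23}, \qquad a_1 b_{13}-a_6 c_{12}b_{23}+(c_{12}c_{23}-c_{13})a_{11}=0.
\end{equation*}
The key observation is that substituting the second relation into the $(1,3)$ condition produces the cancellation $c_{12}c_{23}a_{11}=a_6 c_{12}b_{23}$, so the latter simplifies to $a_1 b_{13}=a_{11}c_{13}$. I now have three independent sign relations governing $b_{12}, b_{13}, b_{23}$ in terms of $c_{12}, c_{13}, c_{23}$ and the three signs $a_1, a_6, a_{11}$.

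The main (and only subtle) step will be the sign-budgeting argument. I have exactly three sign choices $a_1, a_6, a_{11}\in\{\pm 1\}$ to distribute among three conditions. I can use the pair $(a_1, a_6)$ to force $b_{12}\geq 0$, then the pair $(a_1, a_{11})$ to force $b_{13}\geq 0$; these two choices together fix the product $a_6 a_{11}$, so the sign of $b_{23}=a_6 a_{11}c_{23}$ can no longer be controlled, explaining why $b_{23}\in\mathbb{R}$ in the statement while $b_{12}, b_{13}\geq 0$. This will complete the proof.
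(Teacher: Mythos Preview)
Your proposal is correct and follows essentially the same approach as the paper: the same reduction via Lemmas \ref{lemma2.2} and \ref{lemma2.3} to the normal form, the same diagonal automorphism yielding the same three sign relations (including the cancellation that collapses the $(1,3)$ condition to $a_1 b_{13}=a_{11}c_{13}$), and the same sign-budgeting conclusion. If anything, you are slightly more explicit than the paper about why $b_{23}$ cannot be normalized once $b_{12}$ and $b_{13}$ are.
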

\begin{proof}
Applying an automorphism of $\mathfrak{g}$,  we can put $B'$ in the form
\begin{equation} \label{form11}
B = \begin{bmatrix}
1 & b_{12} & b_{13} & 0\\
0 & 1 & b_{23} & 0\\
0 & 0 & 1 & 0\\
0 & 0 & 0 & b_{44}\\
\end{bmatrix}, \quad b_{44} > 0. 
\end{equation}
Let $A$ be an automorphism of $\mathfrak{g}$ satisfying lemma \ref{lemma2.3}, given two orthonormal bases $B = (b_{ij})$ and $C = (c_{ij})$ of the form (\ref{form11}), we obtain that
$$C^{-1}AB = \begin{bmatrix}
a_{1} & x & y & 0\\
0 & a_{6} & z & 0\\
0 & 0 & a_{11} & 0\\
0 & 0 & 0 & \frac{b_{44}}{c_{44}}
\end{bmatrix}.$$
Where the elements $x, y$ and $z$ are given by
$$
\left\lbrace\begin{array}{lll}
x = a_1b_{12} - a_{6}c_{12} \\
y = a_1b_{13} - a_{6}c_{12}b_{23} + (c_{12}c_{23} - c_{13})a_{11} \\
z = a_{6}b_{23} - a_{11}c_{23}
\end{array}\right.
$$
The matrix $C^{-1}AB$ is orthogonal precisely when $b_{44} = c_{44}$, $a_{1}, a_{6}$ and $a_{11}$ are equal to $\pm1$ and $x = y = z = 0$. Thus we obtain that
$$
\left\lbrace\begin{array}{lll}
a_1b_{12} =  a_{6}c_{12} \\
a_{6}b_{23} = a_{11}c_{23}\\
a_1b_{13} = a_{11}c_{13}
\end{array}\right.
$$
By choice of $a_{1}$, $a_6$ and $a_{11}$, any orthonormal basis is equivalent to one with $b_{12}, b_{13} \geq 0$ and $b_{23} \in \mathbb{R}$.
\end{proof}
\subsection{The Lie algebra $\operatorname{A}_{4, 6}^{\alpha, \beta}$}
The Lie algebra $\operatorname{A}_{4, 6}^{\alpha, \beta}$ has a basis $\mathcal{B} = \left\lbrace e_1, e_2, e_3, e_4\right\rbrace$ such that the nonzero brackets are 
$$[e_1, e_4] = \alpha e_1, \quad [e_2, e_4] = \beta e_2 - e_3, \quad [e_3, e_4] = e_2 + \beta e_3.$$
The automorphisms of the Lie algebra $\operatorname{A}_{4, 6}^{\alpha, \beta}$ are given by elements of $\operatorname{GL}(4, \mathbb{R})$ that take the form \cite{christodoulakis2003automorphisms}
$$\begin{bmatrix}
a_1 & 0 & 0 & a_4\\
0 & a_6 & a_7 & a_8\\
0 & -a_7 & a_6 & a_{12}\\
0 & 0 & 0 & 1
\end{bmatrix}.
$$
\begin{theorem}
Every metric on $\operatorname{A}_{4, 6}^{\alpha, \beta}$ is equivalent to one defined by an orthonormal basis
$$X_1 = e_1, \quad X_2 = b_{12}e_1 + e_2, \quad X_3 = b_{13}e_1 + b_{33}e_3, \quad X_4 = b_{44}e_{4}$$
$\hspace{2.3cm}  b_{44} > 0, \quad 0 < b_{33} < 1, \quad b_{12}, b_{13} \geq 0.$\\
or 
$$X_1 = e_1, \quad X_2 = b_{12}e_1 + e_2, \quad X_3 = b_{13}e_1 + b_{33}e_3, \quad X_4 = b_{44}e_{4}$$
$\hspace{2.3cm}  b_{44} > 0, \quad 0 < b_{33} < 1, \quad b_{12} > 0, \quad b_{13} < 0.$\\
or
$$X_1 = e_1, \quad X_2 = b_{12}e_1 + e_2, \quad X_3 = e_3, \quad X_4 = b_{44}e_{4}$$
$\hspace{2.3cm}  b_{44} > 0, \quad b_{12} \geq 0.$
\end{theorem}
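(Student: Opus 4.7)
The plan is to follow the template of Theorem \ref{reasoning}, since the automorphism group of $\operatorname{A}_{4,6}^{\alpha,\beta}$ contains the same complex-multiplication block $\begin{bmatrix} a_6 & a_7 \\ -a_7 & a_6 \end{bmatrix}$ that appears in $\operatorname{A}_{3,7}^{\alpha} \oplus \operatorname{A}_1$, only shifted from positions $1$--$2$ to positions $2$--$3$, together with a free scalar $a_1$ in $(1,1)$ and a fixed $1$ in $(4,4)$. I would first reduce an orthonormal basis $B'$ of the form (\ref{basis}) to a normalized representative. The automorphisms include all block upper triangular elements with arbitrary last column $(a_4,a_8,a_{12})$, so Lemma \ref{lemma2.2} lets me set $b_{14}=b_{24}=b_{34}=0$. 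A diagonal automorphism $\operatorname{diag}(a_1,a_6,a_6,1)$ then normalizes $b_{11}=1$, and the central $\mathbb{C}^* \cong \mathbb{R}^+ \times \operatorname{SO}(2,\mathbb{R})$, combined with the right $\operatorname{O}(4,\mathbb{R})$ action (which absorbs the reflection in $\operatorname{O}(2,\mathbb{R})\setminus\operatorname{SO}(2,\mathbb{R})$), allows me to invoke Lemma \ref{lemma2.5} on the $2 \times 2$ block in positions $(2,3)$ and obtain
\begin{equation*}
B = \begin{bmatrix}
1 & b_{12} & b_{13} & 0\\
0 & 1 & 0 & 0\\
0 & 0 & b_{33} & 0\\
0 & 0 & 0 & b_{44}
\end{bmatrix}, \qquad 0 < b_{33} \leq 1,\ b_{44}>0,\ b_{12},b_{13}\in\mathbb{R}.
\end{equation*}

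To test equivalence of two such bases $B=(b_{ij})$ and $C=(c_{ij})$, Lemma \ref{lemma2.3} forces the intertwining automorphism $A$ to be block diagonal. Writing $A = \begin{bmatrix} a_1 & 0 & 0 & 0\\ 0 & a_6 & a_7 & 0\\ 0 & -a_7 & a_6 & 0\\ 0 & 0 & 0 & 1 \end{bmatrix}$ and computing $C^{-1}AB$ entry by entry, the $(4,4)$ entry gives $b_{44}=c_{44}$, the $(1,1)$ entry gives $a_1=\pm 1$, and the core requirement is that the central $2\times 2$ block $\begin{bmatrix} a_6 & a_7 b_{33} \\ -a_7/c_{33} & a_6 b_{33}/c_{33} \end{bmatrix}$ be orthogonal, while the $(1,2)$ and $(1,3)$ entries vanish.

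The three cases then arise from a split on $b_{33}$. When $0<b_{33}<1$, the off-diagonal orthogonality relation $a_6 a_7 b_{33}(1-1/c_{33}^2)=0$ combined with the two unit column-norm equations forces $a_7=0$, $a_6=\pm 1$ and $b_{33}=c_{33}$; the alternative $a_6=0$ produces $b_{33}c_{33}=1$ with both in $(0,1]$, a contradiction. The residual conditions $a_1 b_{12}=a_6 c_{12}$ and $a_1 b_{13}=a_6 c_{13}$ then identify $(b_{12},b_{13})$ up to the simultaneous sign flip $a_1 a_6\in\{\pm 1\}$, giving the first case ($b_{12},b_{13}\geq 0$) when the two coordinates share a sign and the second case ($b_{12}>0,\ b_{13}<0$) when they do not. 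When $b_{33}=1$, the column norms force $c_{33}=1$ as well; the central block becomes a genuine element of $\operatorname{SO}(2,\mathbb{R})$, and together with $a_1=\pm 1$ it acts on $(b_{12},b_{13})$ by arbitrary rotations, so I rotate to the nonnegative $x$-axis and land in the third case. The main obstacle is the column-norm analysis of the central block: the two unit-norm equations are asymmetric in $b_{33}$ and $1/c_{33}$ unless both equal $1$, and this asymmetry is what forces $a_7=0$ in the generic case; a secondary subtlety is verifying that the first two cases are genuinely inequivalent, which rests on the fact that $\{\pm I\}$ flips both coordinates in lockstep and never independently.
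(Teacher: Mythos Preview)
Your proposal is correct and follows essentially the same route as the paper's proof: reduce via Lemma~\ref{lemma2.2} and Lemma~\ref{lemma2.5} to the normalized form with $b_{11}=b_{22}=1$, $b_{23}=0$, $0<b_{33}\leq 1$, then use Lemma~\ref{lemma2.3} to restrict to block-diagonal automorphisms and analyze orthogonality of $C^{-1}AB$, splitting on $b_{33}=1$ versus $b_{33}<1$. Your column-norm analysis of the central $2\times 2$ block (in particular the explicit exclusion of the $a_6=0$ branch via $b_{33}c_{33}=1$) is actually more detailed than the paper's, which simply asserts that the block cannot be orthogonal when $b_{33}\neq c_{33}$; and your $(1,2)$, $(1,3)$ entries correctly carry the factor $a_1$, which the paper's displayed matrix omits (harmlessly, since $a_1=\pm 1$).
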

\begin{proof}
Applying an upper triangular automorphism of $\mathfrak{g}$,  we can put $B'$ in the form
$$
B'' = \begin{bmatrix}
1 & b_{12} & b_{13} & 0\\
0 & 1 & b_{23} & 0\\
0 & 0 & b_{33} & 0\\
0 & 0 & 0 & b_{44}\\
\end{bmatrix}, \quad b_{33}, b_{44} > 0. 
$$
By lemma \ref{lemma2.5}, $B''$ is equivalent to the following
\begin{equation} \label{form12}
B = \begin{bmatrix}
1 & b_{12} & b_{13} & 0\\
0 & 1 & 0 & 0\\
0 & 0 & b_{33} & 0\\
0 & 0 & 0 & b_{44}\\
\end{bmatrix}, \quad 0 < b_{33} \leq 1, \quad b_{44} > 0. 
\end{equation}
Let $A$ be an automorphism of $\mathfrak{g}$ satisfying lemma \ref{lemma2.3}, given two orthonormal bases $B = (b_{ij})$ and $C = (c_{ij})$ of the form (\ref{form12}), we have
$$C^{-1}AB = \begin{bmatrix}
a_1 & b_{12} - a_6c_{12} + a_7\frac{c_{13}}{c_{33}} & b_{13} + \left( -a_7c_{12} - a_6\frac{c_{13}}{c_{33}}\right)b_{33} & 0\\
\\
0 & a_6 & a_7b_{33} & 0\\
\\
0 & \frac{-a_7}{c_{33}} & \frac{a_6b_{33}}{c_{33}} & 0\\
\\
0 & 0 & 0 & \frac{b_{44}}{c_{44}}
\end{bmatrix}.$$
This matrix will be orthogonal precesily when its middle $2 \times 2$ block is orthogonal,\\ $b_{44} = c_{44}$, $a_{1} = \pm 1$ and the other elements are zero.\\
If $b_{33} \neq c_{33}$, then the middle $2 \times 2$ block cannot be orthogonal because we have\\ $0 < b_{33}, c_{33} \leq 1$. Thus, we assume that $b_{33} = c_{33}$.\\
First suppose that $b_{33} = c_{33} = 1$, then we have 
$\begin{bmatrix}
b_{12}\\
b_{13}
\end{bmatrix} = \begin{bmatrix}
a_6 & -a_7\\
a_7 & a_6
\end{bmatrix}\begin{bmatrix}
c_{12}\\
c_{13}
\end{bmatrix}$. This means that $(b_{12}, b_{13})$ and $(c_{12}, c_{13})$ are in the same orbit of the natural action of $\operatorname{SO}(2, \mathbb{R})$ on $\mathbb{R}^2$. From this we infer that when $b_{33} = 1$, we can put $b_{13} = 0$ and $b_{12} \geq 0$. This is the third metric in our theorem.\\
If $0 < b_{33} < 1$, then the condition that the middle $2 \times 2$ block of $C^{-1}AB$ is orthogonal implies that $a_7 = 0$. Thus
$ \begin{bmatrix}
b_{12}\\
b_{13}
\end{bmatrix} = \pm \begin{bmatrix}
c_{12}\\
c_{13}
\end{bmatrix}.$\\
If $c_{12}$ and $c_{13}$ have the same sign, we can take $b_{12}, b_{13} \geq 0$ (first metric in our theorem). If  $c_{12}$ and $c_{13}$ have different sign, we can take $b_{12} > 0$, $b_{13} < 0$ (second metric in our theorem).
\end{proof}
\subsection{The Lie algebra $\operatorname{A}_{4, 7}$}
The Lie algebra $\operatorname{A}_{4, 7}$ has a basis $\mathcal{B} = \left\lbrace e_1, e_2, e_3, e_4\right\rbrace$ such that the nonzero brackets are 
$$[e_2, e_3] = e_1, \quad [e_1, e_4] = 2 e_2, \quad [e_2, e_4] = e_2, \quad [e_3, e_4] = e_2 + e_3.$$
The automorphism group of the Lie algebra $\operatorname{A}_{4, 7}$ is formed by elements in $\operatorname{GL}(4, \mathbb{R})$ with the form \cite{christodoulakis2003automorphisms}
$$\begin{bmatrix}
a_6^2 & -a_{12}a_6 & -a_{12}(a_6 + a_7) + a_6a_8& a_4\\
0 & a_6 & a_7 & a_8\\
0 & 0 & a_6 & a_{12}\\
0 & 0 & 0 & 1
\end{bmatrix}.
$$
\begin{theorem} \label{theorem4.7}
Every metric on $\operatorname{A}_{4, 7}$ is equivalent to one defined by an orthonormal basis
$$X_1 = b_{11}e_1, \quad X_2 = b_{12}e_1 + e_2, \quad X_3 = b_{13}e_1 + b_{33}e_3, \quad X_4 = b_{44}e_{4}
$$
where  $b_{11}, b_{33}, b_{44} > 0, \quad b_{12} \geq 0, \quad b_{13} \in \mathbb{R}$.
\end{theorem}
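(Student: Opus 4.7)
The plan is to follow the standard four-step strategy laid out in the Preliminaries. First I would verify that Lemma \ref{lemma2.2} applies to this Lie algebra: the last column of a generic element of $\operatorname{Aut}(\mathfrak{g})$ is $(a_4, a_8, a_{12}, 1)^T$ with three free parameters, so taking (for instance) $a_6 = 1$ and $a_7 = 0$ realizes any vector $x \in \mathbb{R}^3$ in those positions. Hence every metric is equivalent to one whose orthonormal basis $B'$ of the form (\ref{basis}) satisfies $b_{14} = b_{24} = b_{34} = 0$.

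Next I would exploit the remaining block-diagonal automorphisms, whose upper $3 \times 3$ part (after setting $a_4 = a_8 = a_{12} = 0$) is
$$\begin{bmatrix} a_6^2 & 0 & 0 \\ 0 & a_6 & a_7 \\ 0 & 0 & a_6 \end{bmatrix}.$$
Choosing $a_6 = 1/b_{22} > 0$ normalizes the $(2,2)$-entry to $1$, and then choosing $a_7 = -b_{23}/(b_{22} b_{33})$ kills the $(2,3)$-entry. Because $a_6^2$ sits in the $(1,1)$ slot, rescaling $b_{22}$ also rescales $b_{11}, b_{12}, b_{13}$ by $1/b_{22}^2$ and $b_{33}$ by $1/b_{22}$; all diagonal entries remain positive, so we may reduce to
$$B = \begin{bmatrix} b_{11} & b_{12} & b_{13} & 0 \\ 0 & 1 & 0 & 0 \\ 0 & 0 & b_{33} & 0 \\ 0 & 0 & 0 & b_{44} \end{bmatrix}, \qquad b_{11},\, b_{33},\, b_{44} > 0,\quad b_{12},\, b_{13} \in \mathbb{R}.$$

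Now given two such bases $B = (b_{ij})$ and $C = (c_{ij})$, Lemma \ref{lemma2.3} forces any automorphism $A$ realizing the equivalence to be block diagonal with the same structure as above. A direct computation shows $C^{-1}AB$ is block diagonal: its lower-right scalar is $b_{44}/c_{44}$, and its upper $3 \times 3$ block is upper triangular with diagonal $\bigl(a_6^2 b_{11}/c_{11},\; a_6,\; a_6 b_{33}/c_{33}\bigr)$ and off-diagonal entries involving $b_{12} - (c_{12}/a_6)$-type expressions, $a_7 b_{33}$, and a more complicated $(1,3)$-entry. Orthogonality of this upper triangular matrix forces all off-diagonal entries to vanish and all diagonal entries to be $\pm 1$, which yields $a_6 = \pm 1$, $a_7 = 0$, $b_{11} = c_{11}$, $b_{33} = c_{33}$, $b_{44} = c_{44}$, together with $a_6 b_{12} = c_{12}$ and $a_6 b_{13} = c_{13}$. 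The single remaining sign choice of $a_6$ can then be used to force $b_{12} \geq 0$, while $b_{13}$ is only determined up to that same sign and so stays free in $\mathbb{R}$.

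The only step requiring genuine care is the normalization in Step 2, because the automorphism couples the rescaling of $b_{22}$ with a \emph{quadratic} rescaling of $b_{11}$ through the entry $a_6^2$; one must verify that positivity of all the diagonal entries is preserved by the chosen $a_6$. Apart from this, the proof is a routine verification of orthogonality of $C^{-1}AB$ together with a sign-choice argument identical in spirit to the earlier theorems in the paper.
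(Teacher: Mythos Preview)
Your proposal is correct and follows essentially the same route as the paper's proof: reduce to the block-diagonal form \eqref{form13} via an automorphism (your explicit use of Lemma~\ref{lemma2.2} and the normalization $a_6 = 1/b_{22}$, $a_7$ chosen to kill $b_{23}$, is exactly what the paper's one-line ``by multiplying $B'$ by an automorphism'' hides), then invoke Lemma~\ref{lemma2.3} to restrict to block-diagonal $A$, compute $C^{-1}AB$, and read off $a_7=0$, $a_6=\pm1$, $b_{ii}=c_{ii}$, $b_{12}=a_6c_{12}$, $b_{13}=a_6c_{13}$, using the sign of $a_6$ to force $b_{12}\geq 0$. Your remark about the quadratic coupling $a_6^2$ in the $(1,1)$ position is a valid point of care that the paper leaves implicit, but it does not alter the argument.
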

\begin{proof}
By multiplying $B'$ by an automorphism of $\mathfrak{g}$,  we can put $B'$ in the form
\begin{equation} \label{form13}
B = \begin{bmatrix}
b_{11} & b_{12} & b_{13} & 0\\
0 & 1 & 0 & 0\\
0 & 0 & b_{33} & 0\\
0 & 0 & 0 & b_{44}\\
\end{bmatrix}, \quad b_{11}, b_{33}, b_{44} > 0. 
\end{equation}
Let $A$ be an automorphism of $\mathfrak{g}$ satisfying lemma \ref{lemma2.3}, given two orthonormal bases $B = (b_{ij})$ and $C = (c_{ij})$ of the form (\ref{form13}), we have
$$C^{-1}AB = \begin{bmatrix}
\frac{a_6^2b_{11}}{c_{11}} & \frac{a_6^2b_{12} - a_6c_{12}}{c_{11}} & \frac{a_6^2b_{13}}{c_{11}} + \left( \frac{-a_7c_{12}}{c_{11}} - \frac{-a_6c_{13}}{c_{11}c_{33}}\right)b_{33} & 0\\
\\
0 & a_6 & a_7b_{33} & 0\\
\\
0 & 0 & \frac{a_6b_{33}}{c_{33}} & 0\\
\\
0 & 0 & 0 & \frac{b_{44}}{c_{44}}
\end{bmatrix}.$$
This matrix is orthogonal precesily when $a_7 = 0$, $a_6 = \pm1$, $b_{44} = c_{44}$, $b_{33} = c_{33}$, $b_{11} = c_{11}$, $b_{12} = a_6c_{12}$ and $b_{13} = a_6c_{13}$. Hence by choice of $a_6$ we can put $b_{12} \geq 0$ and thus $b_{13} \in \mathbb{R}$.
\end{proof}
\subsection{The Lie algebra $\operatorname{A}_{4, 9}^{\beta}$}
The Lie algebra $\operatorname{A}_{4, 9}^{\beta}$ has a basis $\mathcal{B} = \left\lbrace e_1, e_2, e_3, e_4\right\rbrace$ such that the nonzero brackets are 
$$[e_2, e_3] = e_1, \quad [e_1, e_4] = (1 + \beta) e_1, \quad [e_2, e_4] = e_2, \quad [e_3, e_4] = \beta e_3.$$
The automorphism group of the Lie algebra $\operatorname{A}_{4, 9}^{\beta}$ consists of elements of $\operatorname{GL}(4, \mathbb{R})$ of the form \cite{christodoulakis2003automorphisms}
$$\begin{bmatrix}
a_{11}a_6 & -a_{12}a_6/\beta & a_8a_{11}& a_4\\
0 & a_6 & 0 & a_8\\
0 & 0 & a_{11} & a_{12}\\
0 & 0 & 0 & 1
\end{bmatrix}.
$$
\begin{theorem}
Every metric on $\operatorname{A}_{4, 9}^{\beta}$ is equivalent to one defined by an orthonormal basis
$$X_1 = b_{11}e_1, \quad X_2 = b_{12}e_1 + e_2, \quad X_3 = b_{13}e_1 + b_{23}e_2 + e_3, \quad X_4 = b_{44}e_{4}
$$
where  $b_{11}, b_{44} > 0$, \quad $b_{12}, b_{13} \geq 0, \quad b_{23} \in \mathbb{R}$.
\end{theorem}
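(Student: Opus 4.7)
The plan follows the three-step template used throughout this section. First, the fourth column of a general automorphism of $\operatorname{A}_{4,9}^\beta$ is $(a_4, a_8, a_{12}, 1)^T$ with $a_4, a_8, a_{12}$ free parameters, so Lemma \ref{lemma2.2} applies and I may assume the orthonormal basis $B'$ of the form (\ref{basis}) is block diagonal, i.e.\ $b_{14} = b_{24} = b_{34} = 0$. Then I apply a diagonal automorphism $\operatorname{diag}(a_{11}a_6, a_6, a_{11}, 1)$ with $a_6 = 1/b_{22}$ and $a_{11} = 1/b_{33}$ to normalize $b_{22} = b_{33} = 1$, putting $B'$ into the reduced form
\[
B = \begin{bmatrix} b_{11} & b_{12} & b_{13} & 0 \\ 0 & 1 & b_{23} & 0 \\ 0 & 0 & 1 & 0 \\ 0 & 0 & 0 & b_{44} \end{bmatrix}, \quad b_{11}, b_{44} > 0, \quad b_{12}, b_{13}, b_{23} \in \mathbb{R}.
\]

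For the uniqueness step, given two reduced bases $B$ and $C$, Lemma \ref{lemma2.3} forces the connecting automorphism $A$ to be block diagonal. Inspection of the automorphism matrix shows that $a_4 = a_8 = a_{12} = 0$ simultaneously kills the $(1,2)$ and $(1,3)$ entries of the upper block (namely $-a_{12}a_6/\beta$ and $a_8 a_{11}$), so $A$ is in fact diagonal of the form $\operatorname{diag}(pq, p, q, 1)$ with $p = a_6$ and $q = a_{11}$. I compute $C^{-1}AB$, which is upper triangular, and impose orthogonality, forcing it to be diagonal with $\pm 1$ entries. This yields $p, q \in \{\pm 1\}$, $b_{11} = c_{11}$, $b_{44} = c_{44}$, and the off-diagonal relations $q b_{12} = c_{12}$, $p b_{23} = q c_{23}$, together with $p b_{13} = c_{13}$ (the last following after a small cancellation that uses the $(2,3)$ condition to eliminate cross-terms in the $(1,3)$ entry).

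The only place requiring any care is that cancellation in the $(1,3)$ entry; everything else is a direct translation of the pattern already carried out for $\operatorname{A}_{4,5}^{\alpha,\beta}$. The sign analysis is then immediate: choosing $p$ and $q$ independently in $\{\pm 1\}$ lets me force $b_{12} \geq 0$ and $b_{13} \geq 0$, and with both signs thereby fixed, $b_{23} = (q/p) c_{23}$ is determined only up to a single overall sign, so $b_{23}$ remains an arbitrary real. This matches the claimed normal form.
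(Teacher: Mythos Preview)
Your proof is correct and follows essentially the same approach as the paper: reduce to the block-diagonal form (\ref{form14}) via Lemma~\ref{lemma2.2} and a diagonal automorphism, then use Lemma~\ref{lemma2.3} to see that any remaining automorphism is diagonal of the form $\operatorname{diag}(a_6a_{11}, a_6, a_{11}, 1)$, compute $C^{-1}AB$, and read off the relations $a_{11}b_{12} = c_{12}$, $a_6 b_{13} = c_{13}$, $a_6 b_{23} = a_{11} c_{23}$. Your explicit observation that $a_8 = a_{12} = 0$ kills the off-diagonal entries $-a_{12}a_6/\beta$ and $a_8 a_{11}$, forcing $A$ to be genuinely diagonal, is a point the paper leaves implicit; otherwise the arguments are identical, including the cancellation in the $(1,3)$ entry using the $(2,3)$ relation.
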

\begin{proof}
We can find $A \in \operatorname{Aut}(\mathfrak{g})$, with $AB' = B$, where
\begin{equation} \label{form14}
B = \begin{bmatrix}
b_{11} & b_{12} & b_{13} & 0\\
0 & 1 & b_{23} & 0\\
0 & 0 & 1 & 0\\
0 & 0 & 0 & b_{44}\\
\end{bmatrix}, \quad b_{11}, b_{44} > 0. 
\end{equation}
Let $A$ be an automorphism of $\mathfrak{g}$ satisfying lemma \ref{lemma2.3}, given two orthonormal bases $B = (b_{ij})$ and $C = (c_{ij})$ of the form (\ref{form14}), we have
$$C^{-1}AB = \begin{bmatrix}
\frac{a_{11}a_6b_{11}}{c_{11}} & \frac{x}{c_{11}} & \frac{y}{c_{11}} & 0\\
\\
0 & a_{6} & z & 0\\
\\
0 & 0 & a_{11} & 0\\
\\
0 & 0 & 0 & \frac{b_{44}}{c_{44}}
\end{bmatrix}.$$
Where the elements $x, y$ and $z$ are given by
$$
\left\lbrace\begin{array}{lll}
x = a_6a_{11}b_{12} - a_{6}c_{12} \\
y = a_6a_{11}b_{13} - a_{6}c_{12}b_{23} + (c_{12}c_{23} - c_{13})a_{11} \\
z = a_{6}b_{23} - a_{11}c_{23}
\end{array}\right.
$$
This matrix is orthogonal precesily when $a_6$, $a_{11}$ are equal to $\pm1$, $b_{11} = c_{11}$, $b_{44} = c_{44}$ and $x = y = z = 0$. This means that 
$$
\left\lbrace\begin{array}{lll}
a_{11}b_{12} =  c_{12} \\
a_{6}b_{13} = c_{13}\\
a_6b_{23} = a_{11}c_{23}
\end{array}\right.
$$
By choice of $a_6$ and $a_{11}$ we can assume that $b_{12}, b_{13} \geq 0$ and thus $b_{23} \in \mathbb{R}$.
\end{proof}
\subsection{The Lie algebra $\operatorname{A}_{4, 11}^{\alpha}$}
The Lie algebra $\operatorname{A}_{4, 11}^{\alpha}$ has a basis $\mathcal{B} = \left\lbrace e_1, e_2, e_3, e_4\right\rbrace$ such that the nonzero brackets are 
$$[e_2, e_3] = e_1, \quad [e_1, e_4] = 2\alpha e_1, \quad [e_2, e_4] = \alpha e_2 - e_3, \quad [e_3, e_4] = e_2 + \alpha e_3.$$
The automorphism group associated with the Lie algebra $\operatorname{A}_{4, 11}^{\alpha}$ is comprised of those elements in $\operatorname{GL}(4, \mathbb{R})$ which are of the form \cite{christodoulakis2003automorphisms}
$$\begin{bmatrix}
a_6^2 + a_7^2 & -(w_1)/(1 + \alpha^2) & -(w_2)/(1 + \alpha^2) & a_4\\
\\
0 & a_6 & a_7 & a_8\\
\\
0 & -a_7 & a_6 & a_{12}\\
\\
0 & 0 & 0 & 1
\end{bmatrix}.
$$
where $w_1 = a_6(\alpha a_{12} + a_8) + a_7(\alpha a_8 - a_{12})$ and $w_2 = a_6(a_{12} - \alpha a_8) + a_7(\alpha a_{12} + a_{8})$.
\begin{theorem}
Every metric on $\operatorname{A}_{4, 11}^{\alpha}$ is equivalent to one defined by an orthonormal basis
$$X_1 = b_{11}e_1, \quad X_2 = b_{12}e_1 + e_2, \quad X_3 = b_{13}e_1 + b_{33}e_3, \quad X_4 = b_{44}e_{4}$$
$\hspace{2.3cm}  b_{11}, b_{44} > 0, \quad 0 < b_{33} < 1, \quad b_{12}, b_{13} \geq 0.$\\
or 
$$X_1 = b_{11}e_1, \quad X_2 = b_{12}e_1 + e_2, \quad X_3 = b_{13}e_1 + b_{33}e_3, \quad X_4 = b_{44}e_{4}$$
$\hspace{2.3cm}  b_{11}, b_{44} > 0, \quad 0 < b_{33} < 1, \quad b_{12} > 0, \quad b_{13} < 0.$\\
or
$$X_1 = b_{11}e_1, \quad X_2 = b_{12}e_1 + e_2, \quad X_3 = e_3, \quad X_4 = b_{44}e_{4}$$
$\hspace{2.3cm}  b_{11}, b_{44} > 0, \quad b_{12} \geq 0.$
\end{theorem}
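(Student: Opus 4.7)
The plan is to follow the same two-step strategy used in the proofs for $\operatorname{A}_{4, 6}^{\alpha, \beta}$ and $\operatorname{A}_{4, 7}$, combining them since the automorphism group here has a rotation-like middle $2\times 2$ block analogous to $\operatorname{A}_{4, 6}^{\alpha, \beta}$, together with a scaling of $e_1$ by the determinant $a_6^2 + a_7^2$ analogous to $\operatorname{A}_{4, 7}$. First, by left multiplying the generic orthonormal basis $B'$ of the form (\ref{basis}) by a suitable upper triangular automorphism of $\mathfrak{g}$, I would eliminate the entries $b_{14}, b_{24}, b_{34}$ and normalize $b_{22} = 1$, reducing $B'$ to a matrix
\[
B'' = \begin{bmatrix}
b_{11} & b_{12} & b_{13} & 0\\
0 & 1 & b_{23} & 0\\
0 & 0 & b_{33} & 0\\
0 & 0 & 0 & b_{44}
\end{bmatrix}, \quad b_{11}, b_{33}, b_{44} > 0.
\]
Applying Lemma \ref{lemma2.5} to the middle $2 \times 2$ block (which involves the rotation-scaling subgroup acting on the $e_2 e_3$-plane) I would further reduce to the normal form with $b_{23} = 0$ and $0 < b_{33} \leq 1$.

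Next, given two such normalized bases $B, C$ and an automorphism $A$ satisfying Lemma \ref{lemma2.3} (so the last row and column are in block-diagonal position), I would compute $C^{-1}AB$. The key structural feature I expect to see is that the middle $2 \times 2$ block of $C^{-1}AB$ is essentially $\frac{1}{c_{33}}\begin{bmatrix} a_6 c_{33} & a_7 b_{33}\\ -a_7 & a_6 b_{33}\end{bmatrix}$, whose orthogonality forces $b_{33} = c_{33}$ and either (i) $b_{33} = 1$ so $(a_6, a_7)$ is any element of $\operatorname{SO}(2,\mathbb{R})$, or (ii) $0 < b_{33} < 1$ so necessarily $a_7 = 0$ and $a_6 = \pm 1$. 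The remaining scalar conditions will be $b_{11} = c_{11}$, $b_{44} = c_{44}$, $a_1 = a_6^2 + a_7^2 = 1$, and the first row relations expressing $(b_{12}, b_{13})$ in terms of $(c_{12}, c_{13})$.

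In case (i) where $b_{33} = 1$, the vector $(b_{12}, b_{13})$ is related to $(c_{12}, c_{13})$ by an arbitrary element of $\operatorname{SO}(2,\mathbb{R})$, so we can rotate to place $(b_{12}, b_{13})$ on the nonnegative $x$-axis, giving $b_{13} = 0, b_{12} \geq 0$; this yields the third metric. In case (ii), the only freedom is $(b_{12}, b_{13}) = \pm(c_{12}, c_{13})$, so we can flip signs jointly: if the two coordinates have the same sign we get $b_{12}, b_{13} \geq 0$ (first metric), and if they have opposite signs we get $b_{12} > 0, b_{13} < 0$ (second metric).

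The main obstacle I anticipate is handling bookkeeping for the first row of $C^{-1}AB$, since the $(1,2)$ and $(1,3)$ entries involve mixed terms from $a_4, a_8, a_{12}, w_1, w_2$ together with the scaling factor $a_6^2 + a_7^2$ in the $(1,1)$-slot; one must verify that after imposing the lower-block orthogonality and $a_1 = 1$, these entries vanish exactly under the stated sign-choice conditions. Once this bookkeeping is in hand, the trichotomy follows exactly as in Theorems for $\operatorname{A}_{3,7}^{\alpha} \oplus \operatorname{A}_1$ and $\operatorname{A}_{4,6}^{\alpha,\beta}$, with no further subtleties from the Heisenberg-type bracket $[e_2, e_3] = e_1$.
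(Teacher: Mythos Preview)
Your proposal is correct and follows essentially the same route as the paper: reduce $B'$ via an upper triangular automorphism to $B''$, apply Lemma~\ref{lemma2.5} to the middle $2\times 2$ block to get $b_{23}=0$ and $0<b_{33}\le 1$, then analyse $C^{-1}AB$ with $A$ satisfying Lemma~\ref{lemma2.3} and $a_6^2+a_7^2=1$, concluding by the same trichotomy argument as in Theorem~\ref{reasoning}. The bookkeeping obstacle you flag for the first row dissolves immediately, since Lemma~\ref{lemma2.3} forces $a_4=a_8=a_{12}=0$, whence $w_1=w_2=0$ and the $(1,2)$, $(1,3)$ entries reduce to exactly the expressions displayed in the paper's matrix.
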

\begin{proof}
By multiplying $B'$ by an upper triangular automorphism of $\mathfrak{g}$,  we can put $B'$ in the form
$$
B'' = \begin{bmatrix}
b_{11} & b_{12} & b_{13} & 0\\
0 & 1 & b_{23} & 0\\
0 & 0 & b_{33} & 0\\
0 & 0 & 0 & b_{44}\\
\end{bmatrix}, \quad b_{11}, b_{33}, b_{44} > 0. 
$$
By lemma \ref{lemma2.5}, $B''$ is equivalent to the following
\begin{equation} \label{form141}
B = \begin{bmatrix}
b_{11} & b_{12} & b_{13} & 0\\
0 & 1 & 0 & 0\\
0 & 0 & b_{33} & 0\\
0 & 0 & 0 & b_{44}\\
\end{bmatrix}, \quad b_{11}, b_{44} > 0, \quad 0 < b_{33} \leq 1. 
\end{equation}
Let $A$ be an automorphism of $\mathfrak{g}$ satisfying lemma \ref{lemma2.3} (and such that $a_6^2 + a_7^2 = 1$), given two orthonormal bases $B = (b_{ij})$ and $C = (c_{ij})$ of the form (\ref{form141}), we have
$$C^{-1}AB = \begin{bmatrix}
\frac{b_{11}}{c_{11}} & \frac{b_{12} - a_6c_{12} + a_7\frac{c_{13}}{c_{33}}}{c_{11}} & \frac{b_{13}}{c_{11}} + \left( \frac{-a_7c_{12} - a_6\frac{c_{13}}{c_{33}}}{c_{11}}\right)b_{33} & 0\\
\\
0 & a_6 & a_7b_{33} & 0\\
\\
0 & \frac{-a_7}{c_{33}} & \frac{a_6b_{33}}{c_{33}} & 0\\
\\
0 & 0 & 0 & \frac{b_{44}}{c_{44}}
\end{bmatrix}.$$
Using the same reasoning as in the proof of theorem \ref{reasoning}, we obtain the metrics described in the theorem.
\end{proof}
\subsection{The Lie algebra $\operatorname{A}_{4, 12}$}
The Lie algebra $\operatorname{A}_{4, 12}$ has a basis $\mathcal{B} = \left\lbrace e_1, e_2, e_3, e_4\right\rbrace$ such that the only nonzero brackets are 
$$[e_1, e_3] = e_1, \quad [e_2, e_3] = e_2, \quad [e_1, e_4] = -e_2, \quad [e_2, e_4] = e_1.$$
The automorphism group of the Lie algebra $\operatorname{A}_{4, 12}$ is the group of all elements of $\operatorname{GL}(4, \mathbb{R})$ of the form \cite{christodoulakis2003automorphisms}
$$\begin{bmatrix}
a_1 & a_2 & a_3 & a_4\\
-a_2 & a_1 & a_4 & -a_3\\
0 & 0 & 1 & 0\\
0 & 0 & 0 & 1
\end{bmatrix} \quad \text{or}\quad \begin{bmatrix}
a_1 & a_2 & a_3 & a_4\\
a_2 & -a_1 & -a_4 & a_3\\
0 & 0 & 1 & 0\\
0 & 0 & 0 & -1
\end{bmatrix}.
$$
\begin{theorem}
Every metric on $\operatorname{A}_{4, 12}$ is equivalent to one defined by an orthonormal basis
$$X_1 = e_1, \quad X_2 = b_{22}e_2, \quad X_3 = b_{33}e_3, \quad X_4 = b_{14}e_1 + b_{24}e_2 + b_{34}e_3 + b_{44}e_{4}
$$
$\hspace{2.3cm}  b_{33}, b_{44} > 0, \quad  0 < b_{22} < 1, \quad b_{14}, b_{24} \geq 0, \quad b_{34} \in \mathbb{R}$.\\
or 
$$X_1 = e_1, \quad X_2 = b_{22}e_2, \quad X_3 = b_{33}e_3, \quad X_4 = b_{14}e_1 + b_{24}e_2 + b_{34}e_3 + b_{44}e_{4}
$$
$\hspace{2.3cm}  b_{33}, b_{44} > 0, \quad  0 < b_{22} < 1, \quad b_{14} > 0, \quad b_{24} < 0, \quad b_{34} \in \mathbb{R}$.\\
or
$$X_1 = e_1, \quad X_2 = e_2, \quad X_3 = b_{33}e_3, \quad X_4 = b_{14}e_1 + b_{34}e_3 + b_{44}e_{4}
$$
$\hspace{2.3cm}  b_{33}, b_{44} > 0, \quad b_{14} \geq 0, \quad b_{34} \in \mathbb{R}$.
\end{theorem}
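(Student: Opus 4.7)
The plan is to mirror the strategy used for the Lie algebra $\operatorname{A}_{3, 7}^\alpha \oplus \operatorname{A}_1$ in theorem \ref{reasoning}, since the first family of automorphisms of $\operatorname{A}_{4, 12}$ contains the same rotation-type block $\begin{bmatrix} a_1 & a_2 \\ -a_2 & a_1 \end{bmatrix}$, while the second family supplies a reflection component carrying a $-1$ in the $(4,4)$ slot. First I would bring a general upper-triangular orthonormal basis $B'$ to the reduced form
$$B = \begin{bmatrix}
1 & 0 & 0 & b_{14}\\
0 & b_{22} & 0 & b_{24}\\
0 & 0 & b_{33} & b_{34}\\
0 & 0 & 0 & b_{44}
\end{bmatrix}, \quad 0 < b_{22} \leq 1, \quad b_{33}, b_{44} > 0.$$
An upper-triangular automorphism from the first family (with $a_2 = 0$) lets me normalize $b_{11}$ to $1$ and, through the free parameters $a_3, a_4$, cancel the $(1,3)$ and $(2,3)$ entries; lemma \ref{lemma2.5} then handles the upper $2 \times 2$ block, removing the $(1,2)$ entry and enforcing $0 < b_{22} \leq 1$.

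For the equivalence analysis, given two such reduced bases $B$ and $C$, I would take a block-diagonal automorphism $A$ (with $a_3 = a_4 = 0$) from either family and compute $C^{-1}AB$. Imposing orthogonality row by row from the bottom first forces $b_{44} = c_{44}$ and then $b_{33} = c_{33}$, while coupling the sign of $b_{34}$ to the choice of family: $b_{34} = c_{34}$ for the first and $b_{34} = -c_{34}$ for the second. The upper $2 \times 2$ orthogonality condition then yields $b_{22} = c_{22}$ together with $a_1 a_2(1 - b_{22}^2) = 0$, and the two remaining equations on the last column relate $(b_{14}, b_{24})$ to $(c_{14}, c_{24})$ via the same $2 \times 2$ orthogonal block.

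The case split then follows the template of theorem \ref{reasoning}. If $b_{22} = 1$, the full $\operatorname{O}(2)$-action on $(b_{14}, b_{24})$ becomes available, so $(b_{14}, b_{24})$ can be rotated to $(b_{14}, 0)$ with $b_{14} \geq 0$, yielding the third form. If $0 < b_{22} < 1$, one must have $a_2 = 0$ and $a_1 = \pm 1$, and only the Klein four-group of sign changes survives, generated by $(b_{14}, b_{24}, b_{34}) \sim (-b_{14}, -b_{24}, b_{34})$ (first family) and $(b_{14}, b_{24}, b_{34}) \sim (-b_{14}, b_{24}, -b_{34})$ (second family). Selecting orbit representatives with $b_{14}, b_{24} \geq 0$ produces the first form, and representatives with $b_{14} > 0$, $b_{24} < 0$ produce the second. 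The main obstacle is tracking this coupled sign action: because the sign of $b_{34}$ can only be flipped through the second family, which simultaneously flips exactly one of $b_{14}$ or $b_{24}$, the parameter $b_{34}$ cannot be normalized independently and must be left in $\mathbb{R}$ in both forms.
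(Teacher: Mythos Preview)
Your approach is essentially the paper's: the same upper-triangular reduction followed by Lemma~\ref{lemma2.5} to reach the block form~(\ref{form16}), then the same dichotomy $b_{22}=1$ versus $0<b_{22}<1$. You are in fact more careful than the paper about the second automorphism family: the paper only computes explicitly with the first family (obtaining $(b_{14},b_{24})\sim\pm(c_{14},c_{24})$ and $b_{34}=c_{34}$) and then dismisses the second family with ``we obtain a result similar to that of the previous case''. Your Klein four-group observation is correct and actually shows more than the theorem states --- under the combined action any point with $b_{14}>0$, $b_{24}<0$ is equivalent to $(b_{14},-b_{24},-b_{34})$ lying in the first form, so the second normal form in the statement is redundant; this does not contradict the theorem, which only asserts that the list is exhaustive, not that it is irredundant.
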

\begin{proof}
By multiplication by an upper triangular automorphism of $\mathfrak{g}$, we can put $B'$ in the form
$$
B'' = \begin{bmatrix}
1 & b_{12} & 0 & b_{14}\\
0 & b_{22} & 0 & b_{24}\\
0 & 0 & b_{33} & b_{34}\\
0 & 0 & 0 & b_{44}\\
\end{bmatrix}, \quad b_{22}, b_{33}, b_{44} > 0. 
$$
By lemma \ref{lemma2.5}, $B''$ is equivalent to the following
\begin{equation} \label{form16}
B = \begin{bmatrix}
1 & 0 & 0 & b_{14}\\
0 & b_{22} & 0 & b_{24}\\
0 & 0 & b_{33} & b_{34}\\
0 & 0 & 0 & b_{44}\\
\end{bmatrix}, \quad b_{33}, b_{44} > 0, \quad 0 < b_{22} \leq 1. 
\end{equation}
Given two orthonormal bases $B = (b_{ij})$ and $C = (c_{ij})$ of the form (\ref{form16}), consider $A$ to be an automorphism of $\mathfrak{g}$ of the form $\begin{bmatrix}
a_1 & a_2 & 0 & 0\\
-a_2 & a_1 & 0 & 0\\
0 & 0 & 1 & 0\\
0 & 0 & 0 & 1
\end{bmatrix}$. Then we calculate
$$C^{-1}AB = \begin{bmatrix}
a_1 & a_2b_{22} & 0 & x\\
\\
\frac{-a_2}{c_{22}} & \frac{a_1b_{22}}{c_{22}} & 0 & \frac{y}{c_{22}}\\
\\
0 & 0 & \frac{b_{33}}{c_{33}} & \frac{z}{c_{33}}\\
\\
0 & 0 & 0 & \frac{b_{44}}{c_{44}}
\end{bmatrix}.$$
Where the elements $x, y$ and $z$ are given by
$$
\left\lbrace\begin{array}{lll}
x = a_1b_{14} + a_2b_{24} - c_{14}\frac{b_{44}}{c_{44}} \\
y = -a_2b_{14} + a_1b_{24} - c_{24}\frac{b_{44}}{c_{44}} \\
z = b_{34} - c_{34}\frac{b_{44}}{c_{44}}
\end{array}\right.
$$
The matrix $C^{-1}AB$ is orthogonal precisely when the upper $2 \times 2$ block is orthogonal, $x = y = z = 0$, $b_{33} = c_{33}$ and $b_{44} = c_{44}$. If $b_{22} \neq c_{22}$, then the upper $2 \times 2$ block cannot be orthogonal because we have $0 < b_{22}, c_{22} \leq 1$. Thus, we assume that $b_{22} = c_{22}$.\\
First suppose that $b_{22} = c_{22} = 1$, then we have 
$$\begin{bmatrix}
x\\
y
\end{bmatrix} = \begin{bmatrix}
0\\
0
\end{bmatrix} \Leftrightarrow \begin{bmatrix}
a_1 & a_2\\
-a_2 & a_1
\end{bmatrix}\begin{bmatrix}
b_{14}\\
b_{24}
\end{bmatrix} = \begin{bmatrix}
c_{14}\\
c_{24}
\end{bmatrix}.$$
This means that $(b_{14}, b_{24})$ and $(c_{14}, c_{24})$ are in the same orbit of the natural action of $\operatorname{SO}(2, \mathbb{R})$ on $\mathbb{R}^2$. From this we infer that when $b_{22} = 1$, we can put $b_{24} = 0$ and $b_{14} \geq 0$. This is the third metric in our theorem.\\
If $0 < b_{22} < 1$, then the condition that the upper $2 \times 2$ block of $C^{-1}AB$ is orthogonal implies that $a_2 = 0$. In this case we have 
$$\begin{bmatrix}
x\\
y
\end{bmatrix} = \begin{bmatrix}
0\\
0
\end{bmatrix} \Leftrightarrow \begin{bmatrix}
b_{14}\\
b_{24}
\end{bmatrix} = \pm \begin{bmatrix}
c_{14}\\
c_{24}
\end{bmatrix}.$$
If $c_{14}$ and $c_{24}$ have the same sign, we can take $b_{14}, b_{24} \geq 0$ (first metric in our theorem). If  $c_{14}$ and $c_{24}$ have different sign, we can take $b_{14} > 0$, $b_{24} < 0$ (second metric in our theorem).\\
If we choose an automorphism of the second form in $\operatorname{Aut}(\mathfrak{g})$, using the fact that a complete set of orbit representatives for the natural action of $\operatorname{O}(2, \mathbb{R})$ on $\mathbb{R}^2$ is the nonnegative $x$-axis (see proof of theorem 4.1 in \cite{van2017metrics}), we obtain a result similar to that of the previous case.
\end{proof}

\section*{Acknowledgement}

The authors express their heartfelt gratitude to professor Mohamed Boucetta for his invaluable encouragement in bringing this work to fruition.
\vspace{1cm}

\noindent {\bfseries\large Declarations}\\
\\
\textbf{Author Contributions} Both of authors read and approved the final manuscript.\\
\\
\textbf{Funding} Not applicable. \\
\\
\textbf{Conflict of interest} The authors declare that they have no competing interests.\\
\\
\textbf{Availability of data and materials} Not applicable.\\
\\
\textbf{Code availability} Not applicable.\\


\begin{thebibliography}{99}
	\bibitem{aitbenhaddou2024oscillator} M. Ait Ben Haddou and Y. Ayad, {\it The Isometry Group of the 4-Dimensional Oscillator Group}, J. Lie Theory, {\bf 34} (2024), no. 3, 711--733.
	
	\bibitem{ayad2024isometry} Y. Ayad and S. Fahlaoui, {\it Isometry groups of simply connected unimodular 4-dimensional {Lie} groups}, arXiv preprint arXiv:2412.01588 (2024).
	
	\bibitem{boucetta2022isometry} M. Boucetta and A. Chakkar, {\it The isometry groups of {Lorentzian} three-dimensional unimodular simply connected {Lie} groups}, Rev. Uni{\'o}n Mat. Argent., {\bf 63} (2022), no. 2, 353--378. \url{https://doi.org/10.33044/revuma.2021}
	
	\bibitem{boucetta2022moduli} M. Boucetta and A. Chakkar, {\it The moduli spaces of {Lorentzian} left-invariant metrics on three-dimensional unimodular simply connected {Lie} groups}, J. Korean Math. Soc., {\bf 59} (2022), no. 4, 651--684. \url{https://doi.org/10.4134/JKMS.j210460}
	
	\bibitem{christodoulakis2003automorphisms} T. Christodoulakis, and G. O. Papadopoulos, and A. Dimakis, {\it Automorphisms of real four-dimensional {Lie} algebras and the invariant characterization of homogeneous 4-spaces}, J. Phys. A, Math. Gen., {\bf 36} (2003), no. 2, 427--441. \url{https://doi.org/10.1088/0305-4470/36/2/310}
	
	\bibitem{christodoulakis2003corrigendum} T. Christodoulakis, and G. O. Papadopoulos, and A. Dimakis, {\it Corrigendum: Automorphisms of real four-dimensional Lie algebras and the invariant characterization of homogeneous 4-spaces}, J. Phys. A, Math. Gen., {\bf 36} (2003), no. 9, 2379--2380. \url{https://doi.org/10.1088/0305-4470/36/9/501}
	
	\bibitem{ana2022isometry} A. Cosgaya and S. Reggiani, {\it Isometry groups of three-dimensional Lie groups}, Ann. Global Anal. Geom., {\bf 61} (2022), no. 4, 831--845. \url{https://doi.org/10.1007/s10455-022-09835-3}
	
	\bibitem{ha2009left} K. Y. Ha and J. B. Lee, {\it Left invariant metrics and curvatures on simply connected three-dimensional {Lie} groups}, Math. Nachr., {\bf 282} (2009), no. 6, 868--898. \url{https://doi.org/10.1002/mana.200610777}
	
	\bibitem{ha2012isometry} K. Y. Ha and J. B. Lee, {\it The isometry groups of simply connected 3-dimensional unimodular {Lie} groups}, J. Geom. Phys., {\bf 62} (2012), no. 2, 189--203. \url{https://doi.org/10.1016/j.geomphys.2011.10.011}
	
	\bibitem{ha2023left} K. Y. Ha and J. B. Lee, {\it Left invariant {Lorentzian} metrics and curvatures on non-unimodular {Lie} groups of dimension three}, J. Korean Math. Soc., {\bf 60} (2023), no. 1, 143--165. \url{https://doi.org/10.4134/JKMS.j220238}
	
	\bibitem{kodama2011space} H. Kodama, and A. Takahara, and H. Tamaru, {\it The space of left-invariant metrics on a {Lie} group up to isometry and scaling}, Manuscr. Math. {\bf 135} (2011), no. 1-2, 229--243. \url{https://doi.org/10.1007/s00229-010-0419-4}
	
	\bibitem{kremlev2010signature} A. G. Kremlev and Yu. G. Nikonorov, {The signature of the {Ricci} curvature of left-invariant {Riemannian} metrics on four-dimensional {Lie} groups. {The} nonunimodular case}, Siberian Adv. Math {\bf 20} (2010), 1--57. \url{https://doi.org/10.3103/s1055134410010013}
	
	\bibitem{Mubarakzyanov} G. M. Mubarakzyanov, {On solvable {Lie} algebras}, Izv. Vyssh. Uchebn. Zaved., Mat. {\bf 32} (1), 114--123 (1963). 
	
	\bibitem{shin1997isometry} J. Shin, {\it Isometry groups of unimodular simply connected 3-dimensional {Lie} groups}, Geom. Dedicata, {\bf 65} (1997), no. 3, 267--290. \url{https://doi.org/10.1023/a:1004957320982}
	
	\bibitem{vsukilovic2017isometry} T. {\v{S}}ukilovi{\'c}, {\it Isometry groups of 4-dimensional nilpotent {Lie} groups}, J. Math. Sci., New York, {\bf 225} (2017), no. 4, 711--721. \url{https://doi.org/10.1007/s10958-017-3488-z}
	
	\bibitem{van2017metrics} S. Van Thuong, {\it Metrics on 4-dimensional unimodular {Lie} groups}, Ann. Global Anal. Geom., {\bf 51} (2017), no. 2, 109--128. \url{https://doi.org/10.1007/s10455-016-9527-z}
	
\end{thebibliography}
\end{document}